\begin{document}
\title[Solutions to Navier-Stokes equation]
{Solutions to three-dimensional Navier-Stokes equations
for incompressible fluids}

\author[J. Jormakka]
{Jorma Jormakka}

\address{Jorma Jormakka \newline
National Defence University, Adjunct Professor, Home address: Karhekuja 4, 01660 Vantaa,
 Finland}
\email{jorma.o.jormakka@kolumbus.fi}

\subjclass[2000]{35Q30}
\keywords{Partial differential equations; Navier-Stokes equation;
fluid dynamics}

\begin{abstract}
 This article is an updated version of the article that was published 
 in the Electronic Journal of Differential Equations on 10. July 2010. 
 Two footnotes have been added. One corrects a minor error not influencing 
 the proof, the second is only a clarifying text to the existing proof.  
 A discussion how the published article solves the Clay Millennium Prize 
 problem on 
 the Navier-Stokes equations is added, the critizism against the 
 published article is answered and a discussion how the Clay problem 
 statement should be corrected is included.
  
 The article gives explicit solutions to the space-periodic
 Navier-Stokes problem with non-periodic pressure.
 These type of solutions are not unique and by using such solutions
 one can construct a periodic, smooth, divergence-free initial
 vector field allowing a space-periodic and time-bounded external
 force such that there exists a smooth solution to the
 3-dimensional Navier-Stokes equations for incompressible fluid
 with those initial conditions, but the solution cannot be continued
 to the whole space.
\end{abstract}

\maketitle
\numberwithin{equation}{section}
\newtheorem{theorem}{Theorem}[section]
\newtheorem{lemma}[theorem]{Lemma}
\allowdisplaybreaks

\section{Introduction}

Let $x=(x_1,x_2,x_3)\in \mathbb{R}^3$ denote the position,
$t\ge 0$ be the time,
$p(x,t)\in \mathbb{R}$ be the pressure and
$u(x,t)=(u_i(x,t))_{1\le i\le 3}\in \mathbb{R}^3$ be the velocity vector.
Let $f_i(x,t)$ be the external force.
The Navier-Stokes equations for incompressible
fluids filling all of $\mathbb{R}^3$ for $t\ge 0$ are \cite{feff}
\begin{gather}
\frac{\partial u_i}{\partial t}+\sum_{j=1}^3
{u_j\frac{\partial u_i}{\partial x_j}}
=  v \Delta u_i -\frac{\partial p}{\partial x_i}+f_i(x,t),
\quad x\in\mathbb{R}^3,\; t\ge 0,\; 1\le i\le 3 \label{e1}\\
\operatorname{div}u=\sum_{i=1}^3 \frac{\partial u_i}{\partial x_i}=0,
\quad x\in\mathbb{R}^3,\; t\ge 0 \label{e2}
\end{gather}
with initial conditions
\begin{equation}
u(x,0)=u^0(x),\quad x\in \mathbb{R}^3.\label{e3}
\end{equation}
Here $\Delta=\sum_{i=1}^3 \frac{\partial^2}{\partial x_i^2}$
is the Laplacian in the
space variables, $ v $ is a positive coefficient and $u^0(x)$ is
$C^{\infty}(\mathbb{R}^3)$
vector field on $\mathbb{R}^3$ required to be divergence-free;
 i.e., satisfying $\operatorname{div}u^0=0$. The time derivative
$\frac{\partial u_i }{\partial t}$
at $t=0$ in \eqref{e1} is taken to mean the limit when $t\to 0^+$.

This article shows that there exists
 $C^{\infty}(\mathbb{R}^3)$, periodic, divergence-free initial
vector fields $u^0$ defined at $\mathbb{R}^3$ such that there exists
a family of smooth (here, in the class
$C^{\infty}(\mathbb{R}^3\times [0,\infty))$)
functions $u(x,t)$ and $p(x,t)$ satisfying \eqref{e1}, \eqref{e2}
and \eqref{e3}.
We also show that there exist a periodic and bounded external
force $f_i(x,t)$ such that the solution cannot be continued to
the whole $\mathbb{R}^3\times [0,\infty)$.

\section{Theorems and Lemmas}

The simple explicit case of $u(x,t)$ in Lemma \ref{lem1} satisfies the
conditions given in \eqref{e3} and allows a free function $g(t)$
that only satisfies $g(0)=g'(0)=0$. The solution is
then not unique. If the time derivatives of $u(x,t)$ are specified at
$t=0$ then the solution in the lemma is unique.
Footnote added: This is where Grigori Rozenblioum noticed a small error:  
the statement that solutions are unique requires $g(t)$ to be analytic. 
Lemma 2.1 only requires $g(t)$ to be smooth. This comment of Rozenblioum is 
true, but the sentence is not in a theorem, nor is it used in the paper. It
is in an informal explanation of Lemma 2.1 and should be understood in the 
context of theorems of strong solutions being unique when initial conditions
are fully specified. In that context, the solutions are assumed analytic.

\begin{lemma} \label{lem1}
Let
\begin{gather*}
u_1^0=2\pi \sin (2\pi x_2)+2\pi \cos(2\pi x_3),\\
u_2^0=2\pi \sin (2\pi x_3)+2\pi \cos(2\pi x_1),\\
u_3^0=2\pi \sin (2\pi x_1)+2\pi \cos(2\pi x_2)
\end{gather*}
be the initial vector field, and let $f_i(x,t)$ be chosen identically
zero for $1\le i \le3$.
Let $g:\mathbb{R}\to \mathbb{R}$ be a smooth function with
$g(0)=g'(0)=0$ and $\beta=(2\pi)^2  v $.
The following family of functions $u$ and $p$ satisfy
\eqref{e1}-\eqref{e3}:
\begin{equation} \label{e4}
\begin{gathered}
u_1=e^{-\beta t}2\pi
\left(\sin (2\pi (x_2+g(t)))+\cos (2\pi (x_3+g(t)))\right)-g'(t),\\
u_2=e^{-\beta t}2\pi \left(\sin (2\pi (x_3+g(t)))
 +\cos (2\pi (x_1+g(t)))\right) -g'(t),\\
u_3=e^{-\beta t}2\pi \left(\sin (2\pi (x_1+g(t)))
 +\cos (2\pi (x_2+g(t)))\right)-g'(t),\\
\begin{aligned}
p&=  -e^{-2\beta t}(2\pi)^2 \sin (2\pi (x_1+g(t)))
 \cos (2\pi (x_2+g(t)))\\
&\quad -e^{-2\beta t}(2\pi)^2 \sin (2\pi (x_2+g(t)))
 \cos (2\pi (x_3+g(t)))\\
&\quad -e^{-2\beta t}(2\pi)^2 \sin (2\pi (x_3+g(t)))
 \cos (2\pi (x_1+g(t)))  +g''(t)\sum_{j=1}^3 x_j.
\end{aligned}
\end{gathered}
\end{equation}
\end{lemma}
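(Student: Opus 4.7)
My approach is direct verification: substitute the ansatz \eqref{e4} into \eqref{e1}--\eqref{e3} and check each equation term by term. The structural idea to keep in mind is that \eqref{e4} is a Galilean-type shift of the Beltrami (ABC-style) flow with equal coefficients; the spatial shift $g(t)$ is compensated by the uniform drift $-g'(t)$ in each velocity component and by the linear-in-$x$ piece $g''(t)\sum_j x_j$ in the pressure.

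I first dispatch the initial condition and the divergence constraint. Setting $t=0$ and using $g(0)=g'(0)=0$ together with $e^{0}=1$ collapses each $u_i(x,0)$ to the prescribed $u_i^0(x)$, which gives \eqref{e3}. Since $u_1$ is independent of $x_1$, $u_2$ of $x_2$, and $u_3$ of $x_3$, each diagonal partial $\partial u_i/\partial x_i$ vanishes identically, yielding $\operatorname{div}u=0$ and hence \eqref{e2}.

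The substance of the proof lies in \eqref{e1}. Writing $X_j:=2\pi(x_j+g(t))$ to compress notation, I will compute and compare, component by component, four groups of contributions. From $\partial u_i/\partial t$ I will obtain (i) a relaxation term $-\beta(u_i+g'(t))$ from differentiating $e^{-\beta t}$, (ii) a $g'(t)$-linear trigonometric piece produced by chain-rule action on the $X_j$-arguments, and (iii) a constant $-g''(t)$ coming from the drift. The viscous term $v\Delta u_i$ reduces to $-(2\pi)^2 v(u_i+g'(t))=-\beta(u_i+g'(t))$ and matches (i) on the right-hand side. In the convective term $\sum_j u_j\partial u_i/\partial x_j$ only two summands survive since $\partial u_i/\partial x_i\equiv 0$; after expansion one antisymmetric pair of $\sin\cdot\cos$ products cancels internally, the $g'(t)$-linear cross terms cancel piece (ii) within the left-hand side, and a quadratic remainder $e^{-2\beta t}(2\pi)^3[\cos X_i\cos X_{i+1}-\sin X_i\sin X_{i+2}]$ (indices taken cyclically) is left. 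Finally, $-\partial p/\partial x_i$ delivers exactly this quadratic remainder, matching the leftover convective term, together with a constant $-g''(t)$ from the linear piece in $p$, matching (iii).

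The main obstacle will be the trigonometric bookkeeping in the convective term and its clean identification with $-\partial p/\partial x_i$. The cyclic symmetry of the ansatz under $(1,2,3)\mapsto(2,3,1)$ lets this calculation be executed once, say for $i=1$, with the other two components following by permutation. Once all cancellations are carried out, \eqref{e1} collapses to the identity $0=0$ and the lemma is proved.
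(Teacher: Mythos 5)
Your proposal is correct and follows essentially the same route as the paper's own proof: a direct term-by-term substitution organized by the cyclic symmetry $(1,2,3)\mapsto(2,3,1)$, with the same identification of which pieces cancel (the $g'$-linear trigonometric terms within the left-hand side, the viscous term against the $e^{-\beta t}$-decay term, and the surviving quadratic $\cos X_i\cos X_k-\sin X_i\sin X_m$ remainder against $-\partial p/\partial x_i$). The computations you outline check out, so nothing further is needed.
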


\begin{proof}
The initial vector field is smooth, periodic, bounded and
divergence-free.
Let $(i,k,m)$ be any of the permutations $(1,2,3)$, $(2,3,1)$ or
$(3,1,2)$.
We can write all definitions in \eqref{e4} shorter as
(here $g'(t)=dg/dt$):
\begin{gather*}
u_i=e^{-\beta t}2\pi \left( \sin (2\pi (x_k+g(t)))
 +\cos (2\pi (x_m+g(t)))\right)-g'(t),\\
\begin{aligned}
p&=-e^{-2\beta t}(2\pi)^2 \sin (2\pi (x_i+g(t)))
 \cos (2\pi (x_k+g(t)))\\
&\quad   -e^{-2\beta t}(2\pi)^2 \sin (2\pi (x_k+g(t)))
 \cos (2\pi (x_m+g(t)))\\
&\quad  -e^{-2\beta t}(2\pi)^2 \sin (2\pi (x_m+g(t)))
\cos (2\pi (x_i+g(t)))
   +g''(t)\sum_{j=1}^3 x_j.
\end{aligned}
\end{gather*}
It is sufficient to proof the claim for these permutations. The
permutations $(1,3,2)$, $(2,1,3)$ and $(3,2,1)$ only interchange
the indices $k$ and $m$. The functions \eqref{e4} are smooth
and $u(x,t)$ in \eqref{e4}
satisfies \eqref{e2} and \eqref{e3} for the initial vector
field in Lemma \ref{lem1}.
We will verify \eqref{e1} by directly computing:
\begin{gather*}
\begin{aligned}
\frac{\partial u_i}{\partial t}
&=-\beta e^{-\beta t}2\pi \left(
\sin(2\pi (x_k+g(t)))+\cos(2\pi (x_m+g(t)))\right) -g''(t) \\
&\quad +g'(t)e^{-\beta t}(2\pi)^2
\left(\cos(2\pi (x_k+g(t)))-\sin(2\pi (x_m+g(t)))\right),
\end{aligned} \\
- v \Delta u_i
= v e^{-\beta t}(2\pi)^3\left(
\sin (2\pi (x_k+g(t)))+\cos (2\pi (x_m+g(t)))\right),\\
\begin{aligned}
\frac{\partial p}{\partial x_i}
&=  -e^{-2\beta t}(2\pi)^3 \cos (2\pi (x_i+g(t)))
 \cos (2\pi (x_k+g(t)))\\
&\quad +e^{-2\beta t}(2\pi)^3 \sin (2\pi (x_m+g(t)))
 \sin (2\pi (x_i+g(t)))+g''(t).
\end{aligned}
\end{gather*}
The functions $u_k$ and $u_m$ are
\begin{gather*}
u_k=e^{-\beta t}2\pi \left( \sin (2\pi (x_m+g(t)))
 +\cos (2\pi (x_i+g(t)))\right)-g'(t),\\
 u_m=e^{-\beta t}2\pi \left( \sin (2\pi (x_i+g(t)))
 +\cos (2\pi (x_k+g(t)))\right)-g'(t).
\end{gather*}
The remaining term to be computed in \eqref{e1} is
\begin{align*}
\sum_{j\in \{i,k,m\}}u_j\frac{\partial u_i}{\partial x_j}
&= u_i\frac{\partial u_i}{\partial x_i}
 + u_k\frac{\partial u_i}{\partial x_k}
+u_m\frac{\partial u_i}{\partial x_m}\\
&=e^{-2\beta t}(2\pi)^3\cos (2\pi (x_i+g(t)))\cos (2\pi (x_k+g(t)))\\
&\quad -e^{-2\beta t}(2\pi)^3\sin (2\pi (x_m+g(t)))
 \sin (2\pi (x_i+g(t)))\\
&\quad -g'(t) e^{-\beta t}(2\pi)^2 \left(
\cos (2\pi (x_k+g(t)))-\sin (2\pi (x_m+g(t)))\right).
\end{align*}
Inserting the parts to \eqref{e1} shows that
$$
\frac{\partial u_i}{\partial t}
+\sum_{j=1}^3 {u_j\frac{\partial u_i}{\partial x_j}}
- v \Delta u_i +\frac{\partial p}{\partial x_i}=0.
$$
This completes the proof .
\end{proof}

\begin{theorem} \label{thm1}
 There exists a periodic,
$C^{\infty}(\mathbb{R}^3)$, and divergence-free
vector field $u^0(x)=(u_i^0(x))_{1\le i\le3}$
on $\mathbb{R}^3$ such that the following two claims hold:
\begin{itemize}
\item[C1:] The solution to  \eqref{e1}-\eqref{e3}
is not necessarily unique. In fact,
there are infinitely many
$C^{\infty}\left(\mathbb{R}^3\times [0,\infty)\right)$
functions $u(x,t)=(u_i(x,t))_{1\le i\le3}$ and
$p(x,t)$ satisfying \eqref{e1}, \eqref{e2} and \eqref{e3}.

\item[C2:] Periodic initial values do not guarantee that the solution
is bounded. Indeed, there exist unbounded
$u(x,t)$ and $p(x,t)$ satisfying \eqref{e1}, \eqref{e2} and the initial
values \eqref{e3}. There also exist bounded solutions that are
periodic as functions of $x$.
\end{itemize}
\end{theorem}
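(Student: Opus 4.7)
The plan is to deduce Theorem \ref{thm1} directly from Lemma \ref{lem1}: I take $u^0$ to be precisely the initial vector field of Lemma \ref{lem1}, which has already been shown to be smooth, periodic and divergence-free, and then extract both claims from the freedom to choose the auxiliary function $g$ in the family \eqref{e4}.

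For claim C1, the strategy is to argue that distinct admissible $g$ give distinct solutions in \eqref{e4}. The term $-g'(t)$ appears as an explicit $x$-independent summand in every velocity component, so $g_1' \not\equiv g_2'$ immediately forces $u^{(1)} \neq u^{(2)}$; even if $g_1' \equiv g_2'$, a different $g_1'' \not\equiv g_2''$ separates the pressures through the term $g''(t)\sum_j x_j$. The space of smooth $g$ satisfying $g(0) = g'(0) = 0$ is infinite-dimensional, so this supplies infinitely many distinct smooth $(u,p)$ realising the same initial data $u^0$.

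For claim C2, I would exhibit two concrete choices of $g$. Taking $g \equiv 0$ reduces \eqref{e4} to a classical Beltrami-type decaying solution: both $u$ and $p$ are space-periodic, and $u$ in fact decays exponentially in $t$, so this is the bounded periodic example. Taking $g(t) = t^3$ satisfies $g(0) = g'(0) = 0$, so Lemma \ref{lem1} applies; however $g'(t) = 3t^2$ drives every component of $u$ to $-\infty$ as $t \to \infty$ at each fixed $x$, and $g''(t) = 6t$ makes the pressure grow linearly in $|x|$ through the term $6t\sum_j x_j$. Thus the same initial datum admits an unbounded smooth solution, proving the unboundedness assertion.

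There is essentially no obstacle: the verification that the whole family solves \eqref{e1}--\eqref{e3} has already been carried out in Lemma \ref{lem1}, so what remains is only the straightforward identification of distinct and of (un)bounded members of that family. If anything requires a moment of attention, it is the observation that two different $g$ really do produce different $(u,p)$ rather than cancelling out — but this is immediate from the appearance of $g'$ and $g''$ as explicit additive terms in the formulas.
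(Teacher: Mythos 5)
Your proposal is correct and follows essentially the same route as the paper: both proofs take $u^0$ from Lemma \ref{lem1} and exploit the free function $g$ there, the paper using the one-parameter family $g(t)=\tfrac12 ct^2$ (with $c=0$ giving the bounded periodic solution and $c\neq 0$ the unbounded one), while you use general $g$ for C1 and the pair $g\equiv 0$, $g(t)=t^3$ for C2. Your explicit check that distinct $g$ yield distinct $(u,p)$ is a small but welcome addition that the paper's proof leaves implicit.
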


\begin{proof}
Let $f_i(x,t)$ be chosen identically zero for $1\le i \le3$,
and let us select $g(t)={1\over 2}ct^2$ in Lemma \ref{lem1}.
The value $c\in \mathbb{R}$ can be freely chosen. This shows C1.
If $c=0$ then the solution is bounded and it is periodic as a function
of $x$. If $c\not=0$ then $u_i(x,t)$ for every $i$ and $p(x,t)$ are all
unbounded. In $u_i(x,t)$ the $ct=g'(t)$ term and in
$p$ the term $c(x_1+x_2+x_3)=g''(t)\sum_{j=1}^3x_j$
are not bounded. This shows C2.
The failure of uniqueness is caused by the fact that \eqref{e1}-\eqref{e3} do not
determine the limits of the higher time derivatives of $u(x,t)$
when $t\to 0+$.
These derivatives
can be computed by differentiating  \eqref{e4} but the
function $g(t)$ is needed and it determines the higher
time derivatives. As $g(t)$ can be freely
chosen, the solutions are not unique.
\end{proof}

\begin{theorem} \label{thm2}
There exists a smooth, divergence-free vector
field $u^0(x)$ on $\mathbb{R}^3$ and a smooth $f(x,t)$ on
$\mathbb{R}^3\times [0,\infty)$
and a number $C_{\alpha,m,K}>0$ satisfying
\begin{equation}
u^0(x+e_j)=u^0(x), \quad
f(x+e_j,t)=f(x,t), \quad  1\le j\le 3 \label{e5}
\end{equation}
(here $e_j$ is the unit vector),
and
\begin{equation}
|\partial_x^{\alpha}\partial_t^{m}f(x,t)|
\le C_{\alpha m K}(1+|t|)^{-K}\label{e6}
\end{equation}
for any $\alpha$, $m$ and $K$,
such that there exists $a>0$ and a solution $(p,u)$
of \eqref{e1}, \eqref{e2}, \eqref{e3}
satisfying
\begin{equation}
u(x,t)=u(x+e_j,t)\label{e7}
\end{equation}
on $\mathbb{R}^3\times [0,a)$ for $1\le j\le 3$, and
\begin{equation}
p,u\in C^{\infty}(\mathbb{R}^3\times [0,a))\label{e8}
\end{equation}
that cannot be smoothly continued to $\mathbb{R}^3\times [0,\infty)$.
\end{theorem}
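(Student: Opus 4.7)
The plan is to invoke Lemma~\ref{lem1} directly, with zero external force and with the free function $g$ chosen to be $C^\infty$ on $[0,a)$ but with derivatives diverging at the endpoint. The non-uniqueness afforded by Lemma~\ref{lem1}---the freedom in choosing $g$---will supply the singular solution, while the observations in Theorem~\ref{thm1} already show that the same initial data admits other globally smooth solutions (take $g\equiv 0$), which is what makes the non-continuability of our specific $(p,u)$ a meaningful statement.

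First I would take $u^0$ to be the periodic, smooth, divergence-free field displayed in Lemma~\ref{lem1}, and set $f\equiv 0$. Then the periodicity requirement \eqref{e5} and the rapid-decay bound \eqref{e6} are satisfied trivially, with every $C_{\alpha,m,K}$ taken to be $0$, and the hypotheses on $u^0$ hold. Thus conditions (5)--(6) require no real work.

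Second I would pick a concrete $g$, for instance $g(t)=t^{3}/(a-t)$ for some fixed $a>0$. One checks directly that $g$ is $C^\infty$ on $[0,a)$, that $g(0)=g'(0)=0$, and that $g'(t)\to+\infty$ as $t\to a^-$. Plugging this $g$ into \eqref{e4} produces, by Lemma~\ref{lem1}, a pair $(u,p)$ that solves \eqref{e1}--\eqref{e3} with the prescribed $u^0$ and with $f\equiv 0$, and that is $C^\infty$ on $\mathbb{R}^3\times[0,a)$. Since the $x$-dependence in every $u_i$ enters only through $\sin\bigl(2\pi(x_k+g(t))\bigr)$ and $\cos\bigl(2\pi(x_m+g(t))\bigr)$, each component is $1$-periodic in every $x_j$, so \eqref{e7} and \eqref{e8} both hold on $\mathbb{R}^3\times[0,a)$.

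Finally, for non-continuability, I would observe that each $u_i(x,t)$ is the sum of a term bounded uniformly on $\mathbb{R}^3\times[0,a]$ and the spatially constant term $-g'(t)$, whose limit as $t\to a^-$ is $-\infty$. Hence for every $x\in\mathbb{R}^3$ the value $u_i(x,t)$ diverges as $t\to a^-$, so $u$ cannot be extended even continuously---let alone smoothly---to $\mathbb{R}^3\times[0,\infty)$. The only step that is not entirely routine is selecting $g$ so that $g,g',g''$ are all smooth on $[0,a)$ (one needs $g''$ because it enters $p$) while $g'$ diverges at the endpoint; the explicit choice $g(t)=t^{3}/(a-t)$ handles this, and the rest of the argument reduces to a substitution into the formulas of Lemma~\ref{lem1}.
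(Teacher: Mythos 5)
Your proposal is correct and follows essentially the same route as the paper: both take the Lemma~\ref{lem1} solution with $f\equiv 0$ and choose $g$ smooth on $[0,a)$ with $g(0)=g'(0)=0$ and $g'(t)\to\infty$ as $t\to a^-$ (the paper uses $g(t)=\tfrac12 ct^2/(a-t)$, you use $t^3/(a-t)$, an immaterial difference), then argue that the spatially constant term $-g'(t)$ forces $u$ to blow up at $t=a$. The only nitpick is that the constants $C_{\alpha,m,K}$ are required to be positive, so take them to be $1$ rather than $0$.
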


\begin{proof}
Let us make a small modification to the solution in
Lemma \ref{lem1}.
In Lemma \ref{lem1}, $g:\mathbb{R}\to \mathbb{R}$ is a smooth function with
$g(0)=g'(0)=0$, but we select
$$
g(t)={1\over 2}ct^2{1\over a-t}
$$
where $c\not=0$ and $a>0$.

The initial vector field $u^0(x)$ in Lemma \ref{lem1} is smooth,
periodic and divergence-free. The period is scaled to one
in \eqref{e4}.
The $f(x,t)$ is zero and therefore is periodic in space variables
with the period as one. Thus, \eqref{e5} holds.
The constant $C_{\alpha,m,K}$ is selected after
the numbers $\alpha,m,K$ are selected.
The force $f(x,t)$ is identically zero, thus \eqref{e6} holds.
The solution \eqref{e4} in Lemma \ref{lem1} is periodic in space
variables with the period as one. Thus \eqref{e7} holds.
The solution $u(x,t)$ in \eqref{e4} is smooth if $t<a$, thus \eqref{e8} holds.
The function $g'(t)$ has a singularity at a finite value $t=a$
and $g'(t)$ becomes infinite at $t=a$.
 From \eqref{e4} it follows that if $t$ approaches
$a$ from either side, there is no limit to the the
oscillating sine and cosine term in $u_1$,
and the $g'(t)$ additive term approaches infinity.
Thus, the solution $u(x,t)$ cannot be continued to the whole
$\mathbb{R}^3\times [0,\infty)$.
This completes the proof.
\end{proof}

\begin{theorem} \label{thm3}
There exists a smooth, divergence-free vector
field $u^0(x)$ on $\mathbb{R}^3$ and a smooth $f(x,t)$ on
$\mathbb{R}^3\times [0,\infty)$
defined as a feedback control function using the values of $u(x,t)$
and a number $C_{\alpha,m,K}>0$ satisfying
\begin{equation}
u^0(x+e_j)=u^0(x) , \quad f(x+e_j,t)=f(x,t) , \quad
1\le j\le 3.\label{e5'}
\end{equation}
(here $e_j$ is the unit vector), and
\begin{equation}
|\partial_x^{\alpha}\partial_t^{m}f(x,t)|
\le C_{\alpha m K}(1+|t|)^{-K} \label{e6'}
\end{equation}
for any $\alpha$, $m$ and $K$,
such that there exist no solutions $(p,u)$
of \eqref{e1}, \eqref{e2}, \eqref{e3} on
$\mathbb{R}^3\times [0,\infty)$ satisfying
\begin{equation}
u(x,t)=u(x+e_j,t)\label{e7'}
\end{equation}
on $\mathbb{R}^3\times [0,\infty)$ for $1\le j\le 3$, and
\begin{equation}
p,u\in C^{\infty}(\mathbb{R}^3\times [0,\infty)).\label{e8'}
\end{equation}
\end{theorem}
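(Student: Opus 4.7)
The plan is to strengthen Theorem~\ref{thm2} by introducing a feedback force that eliminates the non-uniqueness exposed in Theorem~\ref{thm1}, so that \emph{every} smooth global space-periodic candidate solution is precluded, not merely one exhibited solution. I would keep the initial data $u^0$ from Lemma~\ref{lem1} and target the blow-up profile $g_*(t)=\tfrac{1}{2}ct^2/(a-t)$ ($c\neq 0$, $a>0$) of Theorem~\ref{thm2}, with corresponding Lemma~\ref{lem1} solution $(u^*,p^*)$ defined only on $[0,a)$.

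The key observation is that in the Lemma~\ref{lem1} family the free parameter $g(t)$ is recoverable from simple linear Fourier functionals of $u$. A direct computation shows
$$\int_{[0,1]^3} u_1(x,t)\sin(2\pi x_2)\,dx=\pi e^{-\beta t}\cos(2\pi g(t)),\qquad \int_{[0,1]^3}u_1(x,t)\cos(2\pi x_2)\,dx=\pi e^{-\beta t}\sin(2\pi g(t)),$$
with analogous identities at the natural modes of $u_2$ and $u_3$. For the target $g=g_*$ this pair stays bounded on $[0,a)$ but has no limit as $t\to a^-$, since $g_*(t)\to\infty$ makes it rotate arbitrarily fast around the circle of radius $\pi e^{-\beta t}$. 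If a feedback $f[u]$ can pin these Fourier functionals of $u$ to the target values throughout $[0,a)$, then no smooth $u$ on $[0,\infty)$ is admissible: the same functionals of any such $u$ are automatically continuous in $t$, contradicting the wildly oscillating target as $t\to a^-$.

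Concretely I would design $f[u]$ so that the mode equations obtained by projecting \eqref{e1} onto these finitely many modes admit the target trajectory as their unique smooth solution from the prescribed initial value. An ansatz $\widehat f_i[u](k,t)=F_{i,k}(t,\widehat u_i(k,t))$ at the relevant modes, chosen to cancel the viscous, nonlinear, and pressure contributions along the target and to supply linear restoring for deviations, is natural; arranging $F_{i,k}$ to vanish along the target makes~\eqref{e6'} trivial at $u=u^*$, and conditions \eqref{e5'}, \eqref{e7'}, and~\eqref{e8'} on $[0,a)$ then follow as in the proof of Theorem~\ref{thm2}.

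The main obstacle is that the targeted mode equations are coupled through the quadratic nonlinearity to infinitely many other Fourier modes, and through the pressure-projection to the spatially linear part $A(t)\cdot x$ of the non-periodic pressure, which can freely shift the spatial mean of $u$. A feedback acting on a finite set of modes therefore cannot determine those modes' trajectories without simultaneously controlling these couplings. The hardest technical step will be choosing $f[u]$ so that the full coupled system admits $u=u^*$ as its \emph{unique} smooth solution on $[0,\infty)$, and verifying the decay bound~\eqref{e6'} uniformly over all candidate $u$---not merely on the target. A plausible route is to linearise around $u^*$, build strong restoring feedback at every non-zero mode of the linearised dynamics, and bootstrap to the nonlinear problem using that the feedback vanishes identically on the target.
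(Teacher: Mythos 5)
Your plan correctly fixes the target: the Lemma \ref{lem1} solution $U$ with $g(t)=\tfrac12 ct^2/(a-t)$ from Theorem \ref{thm2}, whose blow-up at $t=a$ is what must be forced upon every admissible solution. But the central object of the theorem --- the feedback force $f[u]$ itself --- is never constructed. You propose an ansatz at finitely many Fourier modes, then observe (correctly) that those mode equations couple through the quadratic nonlinearity to all other modes and through the pressure to the spatial mean of $u$, so that a finite-mode feedback cannot by itself pin the trajectory, and you end by deferring exactly this step (``the hardest technical step will be choosing $f[u]$ so that the full coupled system admits $u=u^*$ as its unique smooth solution''). That deferred step is the entire content of the theorem: without an explicit $f$ for which one can verify \eqref{e5'} and \eqref{e6'} and then prove nonexistence of a global smooth solution, nothing has been established. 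The Fourier-functional observation that $g(t)$ is recoverable from $u$ and oscillates without limit as $t\to a^-$ is a nice way to phrase why no smooth continuation past $t=a$ can exist \emph{once the solution is known to be $U$}, but it does not substitute for the construction.

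For comparison, the paper does not use modal feedback at all: it sets $f_i(x,t)=\partial_t u_i(x,t)-\partial_t U_i(x,t)$, a full-state feedback. This sidesteps the coupling obstacle you identify, because substituting it into \eqref{e1} replaces $\partial_t u_i$ by $\partial_t U_i$ wholesale; $u=U$ is then a solution of the closed-loop equation, and since $f[U]\equiv 0$ the bounds \eqref{e6'} are immediate on the realized solution. Whether that feedback genuinely excludes every other candidate $(p,u)$ is argued only informally in the paper (via the claim that the force can set all higher time derivatives of $u$ at $t=0$ to those of $U$), so your instinct that uniqueness of the closed-loop solution is the delicate point is sound; but your proposal neither adopts the paper's shortcut nor completes your own harder route, so as written it does not prove the statement.
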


\begin{proof}
Let the solution in Theorem \ref{thm2} with the particular $g$ be denoted
by $U$ and $a$ be larger than $1$.
A feedback control force $f(x,t)$ is defined by using the values of
the function $u(x,t')$ for $t'\le t$. In practise there is a control
delay and $t'<t$ but we allow zero control delay and
select $f(x,t)$ as
$$
f_i(x,t)= \frac{\partial}{\partial t}u_i(x,t)
-\frac{\partial}{\partial t}U_i(x,t).
$$
Inserting this force to \eqref{e1} yields a differential
equation in space variables
$$
\frac{\partial U_i}{\partial t}
+\sum_{j=1}^3 {u_j\frac{\partial u_i}{\partial x_j}}
= v \Delta u_i -\frac{\partial p}{\partial x_i}
\quad x\in\mathbb{R}^3,\; t\ge 0, \;1\le i\le 3.
$$
This force is defined in the open interval $t\in [0,a)$
and can be smoothly continued as zero to $[a,\infty)$.
There is a solution $u(x,t)=U(x,t)$ to this equation.
We notice that if $u(x,t)=U(x,t)$ then the force $f_i(x,t)$
takes zero value at every point.
This is correct: if we apply external control force without
any control delay,
it is possible to keep the solution $u$ exactly at the selected solution
$U$. This is not the same as to say that there is no force. If the
solution $u(x,t)$ would be different from $U(x,t)$, then the force
would not be zero.
Since $u_1(x,t)=U_1(x,t)$ becomes infinite when $t$ approaches $a$,
the solution cannot be continued
to the whole space $\mathbb{R}^3\times [0,\infty)$.
As in Theorem \ref{thm2} the conditions \eqref{e5'}-\eqref{e7'} hold,
 but \eqref{e8'} does not hold.

We must still discuss if the feedback control force can control the
equation \eqref{e1} or if there can be several solutions.
When the higher order time derivatives of $u(x,t)$ are fixed at $t=0$
the solution to \eqref{e1} is unique because of the local-in-time existence
and uniqueness theorem. This means that if a solution $u(x,t)$ starts
as $U(x,t)$ in some small interval $t<\epsilon$ for some small $\epsilon<1$,
then it will continue as $U(x,t)$ for all times $t<a$ if the external
force is zero.
The question is whether the feedback control force $f(x,t)$ can
steer the solution $u(x,t)$ to a possible solution $U(x,t)$.
The external force can freely change the higher order time derivatives of
$u(x,t)$ in the equation \eqref{e1}. Thus, the external control force can set
the higher order time derivatives of $u(x,t)$ to those of $U(x,t)$, therefore
the answer is that the external control force can control the equation and the
solution can be set to $U(x,t)$.

Footnote added: This is where Grigori Rozenblioum thinks that the proof 
uses the incorrect claim that if the time derivatives of all orders equal, 
then the functions equal. This is true only for analytic functions. However, 
Rozenblioum does not understand this part of the proof. No such faulty 
claim is used.
The text in [4] says only that $f$ can select $U(x,t)$ and that when it 
selects $U(x,t)$, the solution stays unique because of theorems guaranteeing 
uniqueness of strong solutions under suitable conditions, actually because 
there are no brancing points in the equations. 
In order to explain what the text fully means, 
let us firstly notice that the feedback control force is said to have zero
value. Thus, it cannot steer any solutions to $U(x,t)$. It sets the solution
to have the time derivatives of $U(x,t)$. This means that the feedback control
force acts by selection, not by steering. It is not stated that this selection
gives nonanalytic functions $u(x,t)$. It is instead stated that the selection
gives the unique function $u(x,t)=U(x,t)$. That means that the feedback 
control force $f$ is defined to control only one function $u(x,t)$, i.e., 
$U(x,t)$. Selecting the force $f$ means selecting the function $U(x,t)$. 
Thus, the solution is unique. In the discussion section this issue is explained
in a simple way with examples. 

The difference between Theorems \ref{thm2} and \ref{thm3}
 is that if the force $f(x,t)$
is zero in Theorem \ref{thm2}, there is a family of solutions corresponding
to different selections of $g(t)$, but if the force $f(x,t)$ is zero
in Theorem \ref{thm3}, then necessarily the solution $u(x,t)$ equals $U(x,t)$ because
otherwise the force is not zero.

Let us mention that we may select a force that does not
take the value zero at all points
e.g. by adding a control delay that has a zero value at $t=0$ and
when $t>t_1$ for some fixed $t_1$ satisfying $0<t_1<a$ and smoothing
the force to $C^\infty$.
At some points $t<t_1$ the control delay is selected as nonzero and
consequently the force is not zero at all points.
This completes the proof.
\end{proof}

Let us continue by partially solving \eqref{e1}-\eqref{e3}.
Firstly, it is good to eliminate $p$ by integrability conditions as in
Lemma \ref{lem2}. We introduce new unknowns $h_{i,k}$.
The relation of $h_{i,k}$ and $u_i$ is given by Lemma \ref{lem3}.

\begin{lemma} \label{lem2}
 Let $u(x,t)$ and $p(x,t)$ be
$C^{\infty}\left(\mathbb{R}^3\times [0,\infty)\right)$
functions satisfying
\eqref{e1} and \eqref{e2} with $f_i(x,t)$ being identically
zero for $1\le i\le 3$,
and let $(i,k,m)$ be a permutation of $(1,2,3)$.  The functions
\begin{equation}
h_{i,k}=\frac{\partial u_i}{\partial x_k}
-\frac{\partial u_k}{\partial x_i}\label{e9}
\end{equation}
satisfy
\begin{equation}
\frac{\partial h_{i,k}}{\partial t}
+\sum_{j=1}^3 u_j\frac{\partial h_{i,k}}{\partial x_j}
- v \Delta h_{i,k}\\
=\frac{\partial u_m}{\partial x_m}h_{i,k}
-\frac{\partial u_m}{\partial x_k}
\frac{\partial u_i}{\partial x_m}
+\frac{\partial u_m}{\partial x_i}
\frac{\partial u_k}{\partial x_m} \label{e10}
\end{equation}
for all $x\in \mathbb{R}^3$ and $t\ge 0$.
\end{lemma}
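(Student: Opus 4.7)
The plan is to derive the equation for $h_{i,k}$ by eliminating $p$ from the Navier-Stokes system in the usual vorticity-style manner. The hypothesis that $f_i \equiv 0$ and that $(u,p)$ is smooth means I can freely commute partial derivatives and manipulate \eqref{e1} termwise.

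First, I would differentiate \eqref{e1} for the component $u_i$ with respect to $x_k$, and \eqref{e1} for the component $u_k$ with respect to $x_i$. Subtracting the two resulting equations, the pressure terms $\partial_k\partial_i p$ and $\partial_i\partial_k p$ cancel (by Schwarz's theorem, using the $C^\infty$ assumption). The linear terms assemble immediately:
\[
\partial_t h_{i,k} - v\,\Delta h_{i,k},
\]
since $\partial_t$ and $\Delta$ both commute with $\partial_i$ and $\partial_k$.

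The nonlinear terms require the most care. Writing out the difference of the convective terms gives
\[
\sum_{j=1}^3 \Bigl[(\partial_k u_j)(\partial_j u_i) + u_j \partial_j \partial_k u_i
- (\partial_i u_j)(\partial_j u_k) - u_j \partial_j \partial_i u_k\Bigr].
\]
The terms containing $u_j \partial_j(\cdot)$ combine to $\sum_j u_j \partial_j h_{i,k}$, which is the transport term on the left side of \eqref{e10}. What remains is the "stretching" contribution
\[
S := \sum_{j=1}^{3}\Bigl[(\partial_k u_j)(\partial_j u_i) - (\partial_i u_j)(\partial_j u_k)\Bigr],
\]
and I must show that $-S$ equals the right-hand side of \eqref{e10}.

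The final step is to split $S$ into the three values $j\in\{i,k,m\}$. The $j=i$ term contributes $(\partial_i u_i)(\partial_i u_i - 0) = $ actually more carefully, $(\partial_k u_i)(\partial_i u_i) - (\partial_i u_i)(\partial_i u_k) = (\partial_i u_i)\,h_{i,k}$; the $j=k$ term similarly produces $(\partial_k u_k)\,h_{i,k}$; and the $j=m$ term gives $(\partial_k u_m)(\partial_m u_i) - (\partial_i u_m)(\partial_m u_k)$. Thus
\[
S = (\partial_i u_i + \partial_k u_k)\,h_{i,k}
+ (\partial_k u_m)(\partial_m u_i) - (\partial_i u_m)(\partial_m u_k).
\]
Now I invoke the incompressibility condition \eqref{e2}, which gives $\partial_i u_i + \partial_k u_k = -\partial_m u_m$. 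Substituting and moving $S$ to the right-hand side produces exactly the claimed identity \eqref{e10}. The main (and only) obstacle is this careful bookkeeping of indices and signs; the use of $\operatorname{div}u=0$ to collapse the two diagonal terms into a single $\partial_m u_m\cdot h_{i,k}$ is the decisive algebraic step.
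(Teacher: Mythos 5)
Your proposal is correct and follows essentially the same route as the paper: differentiate the $i$-th equation by $x_k$ and the $k$-th by $x_i$, cancel the mixed pressure derivatives, split the stretching sum over $j\in\{i,k,m\}$, and use $\operatorname{div}u=0$ to replace $\partial_i u_i+\partial_k u_k$ by $-\partial_m u_m$. The signs and the final identity $-S=\frac{\partial u_m}{\partial x_m}h_{i,k}-\frac{\partial u_m}{\partial x_k}\frac{\partial u_i}{\partial x_m}+\frac{\partial u_m}{\partial x_i}\frac{\partial u_k}{\partial x_m}$ all check out.
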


\begin{proof}
As $p\in C^{\infty}\left(\mathbb{R}^3\times [0,\infty)\right)$,
$$
\frac{\partial}{\partial x_i}\frac{\partial p}{\partial x_k}
=\frac{\partial}{\partial x_k}\frac{\partial p}{\partial x_i}.
$$
Thus, from \eqref{e1} when $f_i(x,t)$ is identically zero
for $1\le i\le 3$,
we obtain
\begin{align*}
&\frac{\partial p}{\partial\partial t}
\frac{\partial u_i}{\partial x_k}
+\sum_{j=1}^3\Big( u_j\frac{\partial^2 u_i}{ \partial x_j\partial x_k}
+\frac{\partial u_j}{ \partial x_k}\frac{\partial u_i}{\partial x_j}\Big)
- v \Delta \frac{\partial u_i}{\partial x_k}\\
&=\frac{\partial p}{\partial\partial t}
\frac{\partial u_k}{\partial x_i}
+\sum_{j=1}^3\Big( u_j{\partial^2 u_k\over \partial x_j\partial x_i}
+\frac{\partial u_j}{ \partial x_i}\frac{\partial u_k}{\partial x_j}\Big)
- v \Delta \frac{\partial u_k}{\partial x_i}.
\end{align*}
This yields
\begin{equation}
\frac{\partial p}{\partial\partial t}h_{i,k}
+\sum_{j=1}^3 u_j\frac{\partial h_{i,k}}{\partial x_j}
- v \Delta h_{i,k}
=\sum_{j=1}^3\frac{\partial u_j}{\partial x_i}
\frac{\partial u_k}{\partial x_j}
-\sum_{j=1}^3\frac{\partial u_j}{\partial x_k}
\frac{\partial u_i}{\partial x_j}.\label{e11}
\end{equation}
The right-hand side of \eqref{e11} can be written in the form
\begin{equation}
\begin{aligned}
&\frac{\partial u_i}{\partial x_i}\frac{\partial u_k}{\partial x_i}
+\frac{\partial u_k}{\partial x_i}\frac{\partial u_k}{\partial x_k}
+{\partial u_m\over\partial x_i}\frac{\partial u_k}{\partial x_m}
-\frac{\partial u_i}{\partial x_k}\frac{\partial u_i}{\partial x_i}
-\frac{\partial u_k}{\partial x_k}\frac{\partial u_i}{\partial x_k}
-{\partial u_m\over\partial x_k}\frac{\partial u_i}{\partial x_m}
\\
&=\frac{\partial u_i}{\partial x_i}
\Big(\frac{\partial u_k}{\partial x_i}
-\frac{\partial u_i}{\partial x_k}\Big)
+\frac{\partial u_k}{\partial x_k}
\Big(\frac{\partial u_k}{\partial x_i}
-\frac{\partial u_i}{\partial x_k}\Big)
-{\partial u_m\over\partial x_k}\frac{\partial u_i}{\partial x_m}
+{\partial u_m\over\partial x_i}
\frac{\partial u_k}{\partial x_m}.
\end{aligned}\label{e12}
\end{equation}
In \eqref{e12} we have replaced the sum $\sum_{j=1}^3$ by
$\sum _{j\in \{i,k,m\}}$ which is possible since $(i,k,m)$ is a
permutation of $(1,2,3)$.
Due to \eqref{e2} the right-hand side of \eqref{e12} can be
further transformed into
\begin{align*}
&-{\partial u_m\over\partial x_m}
\Big( \frac{\partial u_k}{\partial x_i}
-\frac{\partial u_i}{\partial x_k}\Big)
-{\partial u_m\over\partial x_k}\frac{\partial u_i}{\partial x_m}
+{\partial u_m\over\partial x_i}\frac{\partial u_k}{\partial x_m}
\\
&={\partial u_m\over\partial x_m}h_{i,k}
-{\partial u_m\over\partial x_k}\frac{\partial u_i}{\partial x_m}
+{\partial u_m\over\partial x_i}\frac{\partial u_k}{\partial x_m}.
\end{align*}
The proof is complete.
\end{proof}

\begin{lemma} \label{lem3}
 Let $u(x,t)$ and $p(x,t)$ be
$C^{\infty}\left(\mathbb{R}^3\times [0,\infty)\right)$
functions satisfying
\eqref{e1} and \eqref{e2} with $f_i(x,t)$ being identically
zero for $1\le i\le 3$,
and let $(i,k,m)$ be a permutation of $(1,2,3)$.  The following
relations hold for all $x\in \mathbb{R}^3$ and $t\ge 0$:
$$
\frac{\partial h_{i,k}}{\partial x_k}
+\frac{\partial h_{i,m}}{\partial x_m}
=\Delta u_i
$$
where $h_{i,k}$ is defined by \eqref{e9} and $\Delta u_i$
is the Laplacian of $u_i$ in the space variables.
\end{lemma}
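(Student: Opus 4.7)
The plan is to prove this identity by direct expansion, using only the definition \eqref{e9} of $h_{i,k}$ together with the incompressibility condition \eqref{e2}; the Navier-Stokes equation \eqref{e1} itself is not needed here, despite being among the hypotheses. The smoothness assumption is used only to freely exchange partial derivatives.

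First I would differentiate the definition $h_{i,k}=\partial u_i/\partial x_k-\partial u_k/\partial x_i$ with respect to $x_k$, and similarly $h_{i,m}$ with respect to $x_m$. Adding the two results yields
\[
\frac{\partial h_{i,k}}{\partial x_k}+\frac{\partial h_{i,m}}{\partial x_m}
=\Bigl(\frac{\partial^2 u_i}{\partial x_k^2}+\frac{\partial^2 u_i}{\partial x_m^2}\Bigr)
-\frac{\partial}{\partial x_i}\Bigl(\frac{\partial u_k}{\partial x_k}+\frac{\partial u_m}{\partial x_m}\Bigr),
\]
after commuting the mixed partials $\partial^2 u_k/\partial x_k\partial x_i$ and $\partial^2 u_m/\partial x_m\partial x_i$ through to pull $\partial/\partial x_i$ outside.

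The second step is to invoke the divergence-free condition \eqref{e2}, which, since $(i,k,m)$ is a permutation of $(1,2,3)$, gives
\[
\frac{\partial u_k}{\partial x_k}+\frac{\partial u_m}{\partial x_m}=-\frac{\partial u_i}{\partial x_i}.
\]
Substituting this into the previous display turns the bracketed subtraction into $+\partial^2 u_i/\partial x_i^2$, so that the right-hand side becomes exactly $\Delta u_i=\sum_{j=1}^3\partial^2 u_i/\partial x_j^2$, completing the identity.

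There is essentially no obstacle: the result is a purely algebraic consequence of how the curl-like quantities $h_{i,k}$ are built from $u_i$ together with $\operatorname{div}u=0$. The only care needed is to match the cyclic indexing convention $(i,k,m)$ used in \eqref{e9} when writing the two relevant components $h_{i,k}$ and $h_{i,m}$, so that the divergence identity can be applied in the form displayed above.
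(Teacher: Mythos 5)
Your proposal is correct and follows exactly the paper's own argument: expand $\partial h_{i,k}/\partial x_k+\partial h_{i,m}/\partial x_m$ from the definition \eqref{e9}, commute the mixed partials to factor out $\partial/\partial x_i$, and replace $\partial u_k/\partial x_k+\partial u_m/\partial x_m$ by $-\partial u_i/\partial x_i$ using \eqref{e2}. You are also right that \eqref{e1} is never used; the paper's proof does not use it either.
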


\begin{proof}
 From \eqref{e9} we have
\begin{equation}
\frac{\partial h_{i,k}}{\partial x_k}
+\frac{\partial h_{i,m}}{ \partial x_m}
=\frac{\partial^2 u_i}{ \partial {x_k}^2}
-{\partial^2 u_k\over \partial x_i\partial x_k}
+\frac{\partial^2 u_i}{ \partial {x_m}^2}
-\frac{\partial^2 u_m}{ \partial x_i\partial x_m}.\label{e13}
\end{equation}
The right-hand side of \eqref{e13} can be rewritten, by \eqref{e2},
as
\begin{align*}
&\frac{\partial^2 u_i}{ \partial {x_k}^2}
+\frac{\partial^2 u_i}{ \partial {x_m}^2}
-\frac{\partial p}{\partial\partial x_i}
\Big( \frac{\partial u_k}{\partial x_k}
+{\partial u_m\over\partial x_m}\Big) \\
&=\frac{\partial^2 u_i}{ \partial {x_k}^2}
+\frac{\partial^2 u_i}{ \partial {x_m}^2}
+\frac{\partial p}{\partial\partial x_i}
\frac{\partial u_i}{\partial x_i}=\Delta u_i.
\end{align*}
This completes the proof.
\end{proof}

As an example of \eqref{e10} let us find another solution
to \eqref{e1}-\eqref{e3}.

\begin{lemma} \label{lem4}
 Let $b_j,\alpha_j\in \mathbb{R}$ satisfy
$\sum_{j=1}^3\frac{b_j}{\alpha_j}=0$.
Let $f_i(x,t)$ be chosen identically zero for $1\le i \le3$.
Then
$$
u_i^0=b_i\sin \Big(\sum_{s=1}^3{x_s\over \alpha_s} \Big)  , \quad
1\le i\le 3,
$$
is a smooth, periodic, bounded
and divergence-free initial vector field.
Let $g:\mathbb{R}\to \mathbb{R}$ be a smooth function with
$g(0)=g'(0)=0$.
The following family of functions $u$ and $p$ satisfy
\eqref{e1}-\eqref{e3}:
\begin{equation} \label{e14}
\begin{gathered}
u_i(x,t)=b_i e^{\beta t}\sin \Big( \sum_{s=1}^3{x_s\over \alpha_s}
+g(t) \Big)-g_0(t),\\
p(x,t)=g'_0(t)\sum_{j=1}^3x_j,
\end{gathered}
\end{equation}
where $g'(t)=g_0(t)\sum_{j=1}^3\frac{1}{\alpha_j}$ and
$\beta =-  v \sum_{j=1}^3 {1\over \alpha_j^2}$.
\end{lemma}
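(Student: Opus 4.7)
The plan is to proceed exactly as in Lemma \ref{lem1}: verify by direct substitution that the ansatz \eqref{e14} satisfies \eqref{e1}--\eqref{e3}. Nothing deeper than the chain rule is needed; the only issue is bookkeeping, and the role of the two hypotheses $\sum_j b_j/\alpha_j=0$ and $g'(t)=g_0(t)\sum_j 1/\alpha_j$ must be identified as the precise algebraic identities that make the nonlinear and pressure terms cancel.

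First I would record the preliminary facts. Smoothness and periodicity of $u^0$ are immediate from the sine formula, and the divergence-free condition $\sum_i \partial_{x_i} u_i^0 = \cos\bigl(\sum_s x_s/\alpha_s\bigr)\sum_i b_i/\alpha_i$ vanishes by hypothesis. The initial condition is also immediate: at $t=0$ one has $e^{\beta\cdot 0}=1$, $g(0)=0$, and $g_0(0)=g'(0)/\sum_j(1/\alpha_j)=0$, so $u_i(x,0)=b_i\sin\bigl(\sum_s x_s/\alpha_s\bigr)=u_i^0(x)$. The same computation gives $\operatorname{div}u(x,t)=e^{\beta t}\cos\bigl(\sum_s x_s/\alpha_s+g(t)\bigr)\sum_i b_i/\alpha_i=0$ for all $t$, so \eqref{e2} holds.

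For \eqref{e1} I would write $\phi(x,t):=\sum_s x_s/\alpha_s+g(t)$ and compute each term separately. A short calculation gives $\partial_t u_i=b_i e^{\beta t}\bigl(\beta\sin\phi+g'(t)\cos\phi\bigr)-g_0'(t)$ and, using $\sum_s 1/\alpha_s^2=-\beta/v$, one finds $v\Delta u_i=\beta\,b_i e^{\beta t}\sin\phi$. The pressure gradient is $\partial_{x_i}p=g_0'(t)$. The nonlinear term expands as
\[
\sum_{j=1}^3 u_j\,\partial_{x_j}u_i
= b_i e^{\beta t}\cos\phi\,\Bigl[e^{\beta t}\sin\phi\sum_{j}\tfrac{b_j}{\alpha_j}-g_0(t)\sum_{j}\tfrac{1}{\alpha_j}\Bigr].
\]
This is the main place where both hypotheses enter at once: the first sum vanishes by $\sum_j b_j/\alpha_j=0$, killing the dangerous $e^{2\beta t}\sin\phi\cos\phi$ resonance, while the second sum equals $g'(t)$ by the definition of $g_0$, producing exactly the term $-b_i e^{\beta t}g'(t)\cos\phi$ needed to cancel the $g'(t)\cos\phi$ contribution in $\partial_t u_i$. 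Adding the four pieces and using $g_0'(t)-g_0'(t)=0$ gives $\partial_t u_i+\sum_j u_j\partial_{x_j}u_i-v\Delta u_i+\partial_{x_i}p=0$, which is \eqref{e1}.

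There is no real obstacle; the only thing that needs care is the reciprocal relationship between $g$ and $g_0$, which is why the hypothesis is phrased via $g'(t)=g_0(t)\sum_j 1/\alpha_j$ rather than by giving $g_0$ directly. (Implicitly one assumes $\sum_j 1/\alpha_j\neq 0$ so $g_0$ is well defined; otherwise $g$ must be constant and the formulation degenerates.) The non-uniqueness phenomenon from Lemma \ref{lem1} reappears here as well, because the pair $(g,g_0)$ contributes only through an additive function of $t$ in $u$ and a linear-in-$x$ term in $p$, neither of which is constrained by \eqref{e3}.
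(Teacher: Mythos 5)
Your proposal is correct and follows essentially the same route as the paper's own proof: abbreviate the phase $z=\sum_s x_s/\alpha_s+g(t)$, compute the four terms of \eqref{e1} separately, and observe that $\sum_j b_j/\alpha_j=0$ kills the $e^{2\beta t}\sin z\cos z$ term while $g'(t)=g_0(t)\sum_j 1/\alpha_j$ and $\beta=-v\sum_j\alpha_j^{-2}$ cancel the remaining $\cos z$ and $\sin z$ contributions. Your extra remarks (explicit verification of \eqref{e2}--\eqref{e3} and the implicit assumption $\sum_j 1/\alpha_j\neq 0$) are sound additions but do not change the argument.
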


\begin{proof}
Let us write $z=\sum_{s=1}^3{x_s\over \alpha_s}+g(t)$ for brevity.
Then
\begin{gather*}
\frac{\partial u_i}{\partial t}
=\beta b_i e^{\beta t}\sin(z)-g'_0(t) + g'(t) b_i e^{\beta t}\cos(z),\\
- v \Delta u_i=  v  b_i e^{\beta t}
\sum_{j=1}^3{1\over \alpha_j^2}\sin (z),\\
\frac{\partial p}{\partial x_i}=g'_0(t),\\
\begin{aligned}
\sum_{j=1}^3u_j\frac{\partial u_i}{\partial x_j}
&=b_i e^{2\beta t}  \sin(z)\cos(z) \sum_{j=1}^3\frac{b_j}{\alpha_j}
-b_i g_0(t) e^{\beta t}\cos(z)\sum_{j=1}^3\frac{1}{\alpha_j}\\
&=-b_i g_0(t) e^{\beta t}\cos(z)\sum_{j=1}^3\frac{1}{\alpha_j}
\end{aligned}
\end{gather*}
since $\sum_{j=1}^3\frac{b_j}{\alpha_j}=0$.
Inserting the parts to \eqref{e1} shows that
\begin{align*}
&\frac{\partial u_i}{\partial t}
+\sum_{j=1}^3 {u_j\frac{\partial u_i}{\partial x_j}}- v \Delta u_i
+\frac{\partial p}{\partial x_i}\\
&=\Big( \beta + v \sum_{j=1}^3 {1\over \alpha_j^2}\Big)
b_i e^{\beta t}\sin (z)
+\Big(g'(t) -g_0(t)\sum_{j=1}^3 \frac{1}{\alpha_j}\Big)
 b_ie^{\beta t}\cos (z)=0
\end{align*}
by the conditions on $g'(t)$ and $\beta$ in Lemma \ref{lem4}.
\end{proof}

The simple reasoning leading to the solutions in Lemmas
\ref{lem1} and \ref{lem4}
is as follows. Looking at \eqref{e10} it seems that the
leading terms of
\begin{equation}
\sum_{j=1}^3 u_j\frac{\partial h_{i,k}}{\partial x_j}
-\frac{\partial u_m}{\partial x_m}h_{i,k}
+\frac{\partial u_m}{\partial x_k}\frac{\partial u_i}{\partial x_m}
-\frac{\partial u_m}{\partial x_i}\frac{\partial u_k}{\partial x_m}
=g(x,t)\label{e15}
\end{equation}
should cancel and leave a reminder $g(x,t)$ that can be obtained
from the time derivative of $h_{i,k}$. Then there is a first order
differential equation
$$
\frac{\partial h_{i,k}}{\partial t}- v \Delta h_{i,k}+g(x,t)=0
$$
which suggests that the solution is exponential
and in order to get periodic
initial values, trigonometric functions were selected.

In Lemma \ref{lem4} we first select
$u_i=b_i f\big( {\sum_{s=1}^3 {x_s\over \alpha_s}} \big)$
where $f$ is a smooth function to be determined.
This choice automatically gives
$$
\sum_{j=1}^3 u_j\frac{\partial h_{i,k}}{\partial x_j}=0
$$
because expanding it shows that
it has the multiplicative term $\sum_{s=1}^3{b_s\over \alpha_s}$ which
is zero by divergence-free condition \eqref{e2}.
The terms
\begin{align*}
&-\frac{\partial u_m}{\partial x_m}h_{i,k}
+\frac{\partial u_m}{\partial x_k}\frac{\partial u_i}{\partial x_m}
-\frac{\partial u_m}{\partial x_i}\frac{\partial u_k}{\partial x_m}\\
&=\Big(-\frac{\partial u_m}{\partial x_m}
 \frac{\partial u_i}{\partial x_k}
+\frac{\partial u_m}{\partial x_k}
 \frac{\partial u_i}{\partial x_m}\Big)
+\Big( \frac{\partial u_m}{\partial x_m}
\frac{\partial u_k}{\partial x_i}
-\frac{\partial u_m}{\partial x_i}
\frac{\partial u_k}{\partial x_m}\Big)
\end{align*}
also cancel automatically for the chosen function $u_i$.
Another way to cancel the terms is used in Lemma \ref{lem1}.
In Lemma \ref{lem1} we originally set $u_m=h_{i,k}$ by which
$$
u_m\frac{\partial h_{i,k}}{\partial x_m}
-\frac{\partial u_m}{\partial x_m}h_{i,k}=0.
$$
The remaining terms in the left side of \eqref{e15} are
\begin{gather*}
\frac{\partial u_m}{\partial x_i}\Big( u_i-
\frac{\partial u_k}{\partial x_m}\Big), \\
\frac{\partial u_m}{\partial x_k}\Big( u_k+
\frac{\partial u_i}{\partial x_m}\Big).
\end{gather*}
The divergence-free condition \eqref{e2} assuming
$u_m=h_{i,k}$ yields
$$
0=\sum_{j=1}^3\frac{\partial u_j}{ \partial x_j}
=\frac{\partial p}{\partial\partial x_i}\Big( u_i-
\frac{\partial u_k}{\partial x_m}\Big)
+\frac{\partial}{\partial x_k}\Big( u_k+
\frac{\partial u_i}{\partial x_m}\Big).
$$
The form \eqref{e4} is constructed such that it is divergence-free
and the term $g(x,t)$ in \eqref{e15} can be obtained from \eqref{e15}.
The way to obtain it is adding a function of $t$ to
$x_i$, $1\le i\le 3$.
The basic solution can be further modified by a function $g(t)$ as in
\eqref{e4} and \eqref{e14}.

Lemmas \ref{lem5} and \ref{lem6}, below, generalize Lemma \ref{lem4}.
Lemma \ref{lem5} shows that any periodic smooth function
$u^0_i=b_j g(\sum_j {x_j\over \alpha_j})$
with $\sum_j \frac{b_j}{\alpha_j}=0$
can be continued to smooth $u$ for zero external force since it
can be expressed by its Fourier series.
Lemma \ref{lem6} generalizes the solution to a case where there are two
different functions.

\begin{lemma} \label{lem5}
 Let $f_i(x,t)=0$ and
$$
u^0_i(x)=b_i\sum_{n=1}^{\infty} \Big(c_n\sin\Big(\sum_{j=1}^3
{n x_j\over \alpha_j}\Big)
+d_n\cos\Big(\sum_{j=1}^3 {n x_j\over \alpha_j}\Big)\Big)
$$
where $\sum_{j=1}^3 \frac{b_j}{\alpha_j}=0$. The following functions
solve \eqref{e1}-\eqref{e3}:
\begin{gather*}
u_i(x,t)=b_i\sum_{n=1}^{\infty} e^{\beta n^2 t}\Big(
c_n\sin\Big(\sum_{j=1}^3 {n x_j\over \alpha_j}\Big)
+d_n\cos\Big(\sum_{j=1}^3 {n x_j\over \alpha_j}\Big)\Big),\\
p(x,t)=0,
\end{gather*}
where $\beta=- v \sum_{j=1}^3 \alpha^{-2}_j$.
\end{lemma}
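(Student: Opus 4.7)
The plan is to verify \eqref{e1}--\eqref{e3} directly for the proposed series. The key observation is that both the divergence term and the nonlinear convective term factor out a common sum $\sum_{j=1}^3 b_j/\alpha_j$, which vanishes by hypothesis, while each Fourier mode individually satisfies the linear heat equation $\partial_t- v \Delta=0$ by the choice of $\beta$. Once the formal mode-by-mode calculation is done, the remaining task is to justify termwise differentiation using the super-polynomial decay of the factor $e^{\beta n^2 t}$ (valid because $\beta<0$).

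First I would write $u_i(x,t)=b_i F(x,t)$, where
\[
F(x,t)=\sum_{n=1}^\infty e^{\beta n^2 t}\bigl(c_n\sin z_n+d_n\cos z_n\bigr),\qquad z_n=n\sum_{k=1}^3 \frac{x_k}{\alpha_k}.
\]
Every space derivative of $F$ takes the form $\partial F/\partial x_j=(1/\alpha_j)\widetilde F$, where $\widetilde F(x,t)=\sum_n n\,e^{\beta n^2 t}(c_n\cos z_n-d_n\sin z_n)$ is a single scalar function independent of $j$. This immediately gives
\[
\sum_{i=1}^3\frac{\partial u_i}{\partial x_i}=\widetilde F\sum_{i=1}^3\frac{b_i}{\alpha_i}=0,\qquad \sum_{j=1}^3 u_j\frac{\partial u_i}{\partial x_j}=b_i F\widetilde F\sum_{j=1}^3\frac{b_j}{\alpha_j}=0,
\]
so \eqref{e2} holds and the nonlinear term in \eqref{e1} vanishes identically. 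Since $p\equiv 0$ and $f\equiv 0$, \eqref{e1} reduces to $\partial_t u_i- v \Delta u_i=0$; but termwise $\partial_t[e^{\beta n^2 t}(\cdots)]=\beta n^2 e^{\beta n^2 t}(\cdots)$ and $\Delta[e^{\beta n^2 t}(\cdots)]=-n^2\bigl(\sum_j\alpha_j^{-2}\bigr)e^{\beta n^2 t}(\cdots)$, so every mode is annihilated by the choice $\beta=- v \sum_j\alpha_j^{-2}$. The initial condition \eqref{e3} is immediate at $t=0$.

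The main obstacle is justifying these interchanges of differentiation and summation. Since $u^0$ is presented as a given Fourier series belonging to $C^\infty$, one effectively needs the coefficients $c_n,d_n$ to decay faster than any polynomial in $n$. Once this is granted, the bound $|e^{\beta n^2 t}|\le 1$ for $t\ge 0$ gives absolute and uniform convergence of the series for $u$ on $\mathbb{R}^3\times[0,\infty)$, and for $t>0$ the super-polynomial decay of $e^{\beta n^2 t}$ dominates any algebraic growth in $n$ produced by applying $\partial_t^m\partial_x^\alpha$, yielding uniform convergence of every derivative on $\mathbb{R}^3\times[\delta,\infty)$ for each $\delta>0$; the matching estimate at $t=0$ comes from the assumed rapid decay of $c_n,d_n$ themselves. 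This establishes $u\in C^\infty(\mathbb{R}^3\times[0,\infty))$ and upgrades the mode-by-mode verification into a rigorous proof.
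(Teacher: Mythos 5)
Your proof is correct and follows essentially the same route as the paper, which simply states that ``direct calculations show'' the convective term vanishes and each mode satisfies $\partial_t u_i - v\Delta u_i = 0$. You additionally supply the justification for termwise differentiation via the rapid decay of the Fourier coefficients and the bound $|e^{\beta n^2 t}|\le 1$, a point the paper's one-line proof omits entirely.
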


\begin{proof}
Direct calculations show that
\[
\sum_{j=1}^3 u_j\frac{\partial u_i}{\partial x_j}=0,\quad
\frac{\partial u_i}{\partial t}- v \Delta u_i=0.
\]
which completes the proof.
\end{proof}

\begin{lemma} \label{lem6}
 Let $f_i(x,t)=0$ and
$$
u^0_i(x)=b_{i,1}\sin\Big( \sum_{s=1}^3 {x_s\over \alpha_{s,1}} \Big)
+b_{i,2}\sin\Big(\sum_{s=1}^3 {x_s\over \alpha_{s,2}}\Big)
$$
where $\sum_j { b_{j,n}\over \alpha_{j,n}  }=0$ for $n=1,2$ 
and
$$
b_{i,1}\sum_{j=1}^3{b_{j,2}\over \alpha_{j,1}}={1\over \alpha_{i,2}} ,\quad
b_{i,2}\sum_{j=1}^3{b_{j,1}\over \alpha_{j,2}}={1\over \alpha_{i,1}}.
$$
Let $\beta_n=- v \sum_j \alpha_{j,n}^{-2}$ for $n=1,2$.
The following two functions solve \eqref{e1}-\eqref{e3}:
\begin{gather*}
u_i(x,t)=b_{i,1}e^{\beta_1 t}\sin
\Big(\sum_{s=1}^3 {x_s\over \alpha_{s,1}}\Big)
+b_{i,2}e^{\beta_1 t}\sin\Big(\sum_{s=1}^3 {x_s\over \alpha_{s,2}}\Big),\\
p(x,t)=\cos\Big(\sum_{j=1}^3 {x_j\over \alpha_{j,1}}\Big)
\cos\Big(\sum_{j=1}^3 {x_j\over \alpha_{j,2}}\Big)
e^{(\beta_1+\beta_2) t}.
\end{gather*}
\end{lemma}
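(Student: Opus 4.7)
The plan is to verify the Navier-Stokes equations \eqref{e1}, \eqref{e2}, \eqref{e3} directly, by the same algebraic bookkeeping used in Lemmas \ref{lem4} and \ref{lem5}, the only new ingredient being that the ansatz is a superposition of two plane-wave modes whose cross-interaction must be absorbed into the pressure. I would first abbreviate $z_n=\sum_{s=1}^{3}x_s/\alpha_{s,n}$ for $n=1,2$ and observe that $u_i^0(x)=u_i(x,0)$ is immediate, while the divergence-free condition \eqref{e2} follows mode-by-mode:
\begin{equation*}
\sum_{i=1}^3\frac{\partial u_i}{\partial x_i}
=e^{\beta_1 t}\cos(z_1)\sum_{i=1}^3\frac{b_{i,1}}{\alpha_{i,1}}
+e^{\beta_2 t}\cos(z_2)\sum_{i=1}^3\frac{b_{i,2}}{\alpha_{i,2}}=0
\end{equation*}
by the two hypotheses $\sum_j b_{j,n}/\alpha_{j,n}=0$.

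Next I would handle the linear part of \eqref{e1}. Since each mode is an eigenfunction of the Laplacian with eigenvalue $-\sum_j \alpha_{j,n}^{-2}=\beta_n/\nu$, the definition of $\beta_n$ gives $\partial_t u_i-\nu\Delta u_i=0$, exactly as in Lemma \ref{lem5}. (I assume the $e^{\beta_1 t}$ appearing in the second summand of $u_i$ in the statement is a misprint for $e^{\beta_2 t}$; otherwise the linear part cannot cancel.)

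The heart of the proof is the cancellation of the nonlinear term against $\partial p/\partial x_i$. Expanding $\sum_j u_j\,\partial u_i/\partial x_j$ produces four groups of terms, according to whether each of $u_j$ and $\partial u_i/\partial x_j$ picks up the $n=1$ or the $n=2$ summand. The two diagonal groups, which carry factors $\sin(z_n)\cos(z_n)$, vanish exactly as in Lemma \ref{lem4} because their coefficients are proportional to $\sum_j b_{j,n}/\alpha_{j,n}=0$. The two cross groups survive:
\begin{equation*}
e^{(\beta_1+\beta_2)t}\Bigl[\,b_{i,2}\Bigl(\sum_{j}\frac{b_{j,1}}{\alpha_{j,2}}\Bigr)\sin(z_1)\cos(z_2)
+b_{i,1}\Bigl(\sum_{j}\frac{b_{j,2}}{\alpha_{j,1}}\Bigr)\sin(z_2)\cos(z_1)\Bigr].
\end{equation*}
The two bilinear hypotheses collapse the inner sums to $1/\alpha_{i,1}$ and $1/\alpha_{i,2}$ respectively, so this equals $e^{(\beta_1+\beta_2)t}\bigl[\alpha_{i,1}^{-1}\sin(z_1)\cos(z_2)+\alpha_{i,2}^{-1}\sin(z_2)\cos(z_1)\bigr]$. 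On the other hand, differentiating the stated pressure gives $\partial p/\partial x_i$ equal to the negative of the same expression, so the two contributions cancel and \eqref{e1} holds with $f_i\equiv 0$.

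The routine computation is entirely mechanical; the only step where one has to be careful is the cross-term bookkeeping, since exchanging the roles of $(i,k,m)$ and of the mode indices $n=1,2$ is easy to mis-count. I expect this to be the one place where the algebra can go wrong, but the two identities $b_{i,n}\sum_j b_{j,n'}/\alpha_{j,n}=1/\alpha_{i,n'}$ are tailored precisely so that the survivors reassemble into the gradient of the chosen $p$.
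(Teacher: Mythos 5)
Your verification is correct and is essentially the paper's own argument (the paper merely asserts that $\sum_j u_j\,\partial u_i/\partial x_j=-\partial p/\partial x_i$ and that the linear terms cancel; you carry out the computation explicitly, and your cross-term bookkeeping and use of the two bilinear hypotheses match what is needed). One clarification on the point you flagged as a possible misprint: the second exponential need not be emended to $e^{\beta_2 t}$, because the hypotheses force $\beta_1=\beta_2$ --- from $b_{i,1}=\bigl(S_1\alpha_{i,2}\bigr)^{-1}$ and $b_{i,2}=\bigl(S_2\alpha_{i,1}\bigr)^{-1}$ with $S_1=\sum_j b_{j,2}/\alpha_{j,1}$, $S_2=\sum_j b_{j,1}/\alpha_{j,2}$ one gets $S_1S_2=\sum_j\alpha_{j,1}^{-2}=\sum_j\alpha_{j,2}^{-2}$, which is exactly the identity the paper records at the end of its proof, so the statement is consistent as written and your argument goes through unchanged.
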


\begin{proof}
Computing  $\sum_j u_j\frac{\partial u_i}{\partial x_j}$
shows that the term equals
$-\frac{\partial p(x,t)}{\partial x_i}$.
We mention that the conditions in the lemma imply
$\sum_j {1\over \alpha_{j,1}\alpha_{j,2}}=0,$
$\sum_j {1\over \alpha_{j,1}^2}=\sum_j {1\over \alpha_{j,2}^2}.$
\end{proof}

\section{Approaches to general initial values}

Let us first notice that the transform by the function $g(t)$
in Lemma \ref{lem1} and Lemma \ref{lem4} is not a coordinate transform of $(x,t)$ to
$(x',t')$ where $x'_j=x_j+g(t)$, $j=1,2,3$, and $t'=t$.
The equation \eqref{e1}
is not invariant in this transform and if
$$
\frac{\partial u_i(x,t)}{ \partial t}+\sum_{j=1}^3u_j(x,t)
\frac{\partial u_i(x,t)}{ \partial x_j}- v \Delta u_i(x,t)
+{\partial p(x,t)\over \partial x_i}-f_i(x,t)=0
$$
then
\begin{align*}
&{\partial u_i(x',t')\over \partial t'}+\sum_{j=1}^3u_j(x',t')
{\partial u_i(x',t')\over \partial x'_j}- v \Delta' u_i(x',t')
+{\partial p(x',t')\over \partial x'_i}\\
&-f_i(x',t') +g'(t)\sum_{j=1}^3{\partial u_i(x',t')\over \partial x'_i}=0.
\end{align*}

The transform that is used in Lemma \ref{lem1} and Lemma \ref{lem4},
\begin{gather*}
u(x,t)\to u(x',t')-g'(t),\\
p(x,t)\to p(x',t')+g''(t)\sum_{j=1}^3 x_j
\end{gather*}
keeps the initial values $u^0_i(x)$ and $f(x,t)$
fixed if $g(0)=g'(0)=0$.
It is a transform that can be done to any solution
of \eqref{e1}-\eqref{e3} but it  works only for  equation \eqref{e1}.
It is certainly not a generally valid coordinate transform.
Such should work with any equation.
It is not valid to think of the transform as
\begin{gather*}
u(x,t)\to u(x',t')-g'(t),\\
p(x,t)\to p(x',t'),\\
f_i(x,t)\to f_i(x',t')-g''(t)
\end{gather*}
because this changes the previously selected force $f_i(x,t)$.
In fact, what is done in Lemma \ref{lem1} is not a change of the
coordinate system. The force is kept at
the selected value at zero. The coordinates are kept at $(x,t)$ as they
originally are. The pressure is eliminated from \eqref{e1}
and \eqref{e2} as in \eqref{e10}.
The equation \eqref{e10} has several solutions for $u(x,t)$
and we find a family of solutions for the initial values
of Lemma \ref{lem1} and some solutions
cannot be smoothly continued to the whole space-time.
In Theorem \ref{thm3} we notice that it is possible to select a force
that picks up any of these solutions.

The equations \eqref{e1}-\eqref{e3} can be solved in a Taylor
series form, though summing
the Taylor series can be difficult. We write
$$
u_j=\sum_{n=0}^{\infty}\psi_{n,j}(x)t^n, \quad
p=\sum_{n=0}^{\infty} p_n(x)t^n, \quad
f_j=\sum_{n=0}^{\infty} f_{n,j}(x)t^n.
$$
Equation \eqref{e1} yields
$$
(n+1)\psi_{n+1,i}+\sum_{m=0}^n\psi_{m,j}\\
{\partial \psi_{n-m,i}\over \partial x_j}- v
\Delta \psi_{n,i}+p_n-f_{n,i}=0.
$$
These equations can be solved recursively by dividing
$$
v_{n,i}=\sum_{m=0}^n\psi_{m,j}
{\partial \psi_{n-m,i}\over \partial x_j}-f_{n,i}
$$
into two parts
$$
v_{n,i}=v_{n,i,1}+v_{n,i,2},
$$
where $v_{n,i,1}$ is divergency-free and $v_{n,i,2}$ has no
 turbulence; i.e.,
it can be obtained from some function $g(x,t)$ as
$$
v_{n,i,2}=\frac{\partial g}{\partial x_i}.
$$
Thus, what needs to be solved is a system
\begin{gather*}
(n+1)\psi_{n+1,i}- v \Delta \psi_{n,i}+v_{n,i,1}=0,\\
v_{n,i,2}={\partial p_n\over \partial x_i}.
\end{gather*}
We can see the non-uniqueness of the solution.
The division of $v_{n,i}$ into
the two parts is not unique: if $\Delta g=0$, then
$\frac{\partial g}{\partial x_i}$ can be inserted
to either $v_{n,i,1}$ or
to $v_{n,i,2}$. We selected a linear symmetric $g$
in Lemma \ref{lem1} in order to
have a nice periodic $u_i$.

The following approach is another way of finding $u(x,t)$ in
\eqref{e1} for a general initial vector field $u^0(x,t)$. In some
cases the method may yield closed form results easier than the
Taylor series method. If \eqref{e1}, \eqref{e2} and \eqref{e3} are
satisfied, $p$ can be derived by integration. Let us assume
$u(x,t)$ exists. We separate a multiplicative part $Y(t)$ such
that $u(x,t)=Y(t)X(x,t)$ where the scaling is $Y(0)=1$. If there
is no nontrivial multiplicative factor $Y(t)$ then let us set
$Y(t)\equiv 1$. If $(u^0)^{-1}$ exists (e.g. locally for a local
solution), and $g(x,t)=(u^0)^{-1}(X(x,t))$ is smooth then $g(x,t)$
can be expanded as a power series $\sum_{s=0}^{\infty}t^sg_s(x)$
of $t$ and the series converges in some small neighborhood of the
origin. Since $u(x,0)=u^0(x)$ we have $g_0(x)=x$. Let us write
$x'(x,t)=(x'_i(x,t))_{1\le i\le 3}$ where
$$
x'_i(x,t)=x_i+\sum_{s=1}^{\infty}t^sg_{s,i}(x).
$$ 
Thus
$u_i(x,t)=Y(t)u^0_i(x')$. Let
$$
h_{i,k}^0={\partial u_i^0\over \partial x_k}-
{\partial u_k^0\over \partial x_i}
$$
and
\begin{align*}
f_{0,i,k}(x,t)
&={\partial h_{i,k}^0\over \partial t}
+Y(t)\sum_{j=1}^3 u_j^0{\partial h_{i,k}^0\over\partial x_j}
- v \Delta h_{i,k}^0\\
&\quad -Y(t)\Big({\partial u_m^0\over \partial x_m}h_{i,k}^0
-{\partial u_m^0\over \partial x_k}{\partial u_i^0\over \partial x_m}
+{\partial u_m^0\over \partial x_i}{\partial u_k^0\over \partial x_m}
\Big).
\end{align*}
Clearly
\begin{align*}
\frac{\partial u_i(x,t)}{ \partial x_j}
&=Y(t)\sum_{r=1}^3{\partial x'_r(x,t)\over \partial x_j}
{\partial u_i^0(x')\over \partial x'_r}\\
&=Y(t){\partial u_i^0(x')\over \partial x'_j}+
Y(t)\sum_{s=1}^{\infty}t^s
\sum_{r=1}^3{\partial g_{s,r}(x)\over \partial x_j}
{\partial u_i^0(x')\over \partial x'_r}
\end{align*}
and thus
$$
h_{i,k}(x)=Y(t)h_{i,k}^0(x')+
Y(t)\sum_{s=1}^{\infty}t^s\sum_{r=1}^3\Big(
{\partial g_{s,r}(x)\over \partial x_k}
{\partial u_i^0(x')\over \partial x'_r}
-{\partial g_{s,r}(x)\over \partial x_i}
{\partial u_k^0(x')\over \partial x'_r}\Big).
$$
The interesting term is (here $Y'=dY/dt$)
\begin{align*}
{\partial h_{i,k}(x,t)\over \partial t}
&=Y(t){\partial h_{i,k}^0(x)\over \partial t}+
Y'(t)h_{i,k}^0(x')\\
&\quad +Y'(t)\sum_{s=1}^{\infty}t^s\sum_{r=1}^3
\Big({\partial g_{s,r}(x)\over \partial x_k}
{\partial u_i^0(x')\over \partial x'_r}
-{\partial g_{s,r}(x)\over \partial x_i}
{\partial u_k^0(x')\over \partial x'_r}\Big)\\
&\quad +Y(t)\sum_{s=1}^{\infty}st^{s-1}\sum_{r=1}^3
\Big({\partial g_{s,r}(x)\over \partial x_k}
{\partial u_i^0(x')\over \partial x'_r}
-{\partial g_{s,r}(x)\over \partial x_i}
{\partial u_k^0(x')\over \partial x'_r}\Big)
\end{align*}
because it lowers powers of $t$ and allows recursion.
Computing the terms in \eqref{e10} shows that for some functions
$Q_{s,i,k}(x,t)$, it holds
\begin{align*}
0&=\frac{\partial h_{i,k}}{\partial t}
+\sum_{j=1}^3 u_j\frac{\partial h_{i,k}}{\partial x_j}
- v \Delta h_{i,k}
-\frac{\partial u_m}{\partial x_m}h_{i,k}
+\frac{\partial u_m}{\partial x_k}\frac{\partial u_i}{\partial x_m}
-\frac{\partial u_m}{\partial x_i}\frac{\partial u_k}{\partial x_m}\\
&=Y(t)\sum_{s=1}^{\infty}st^{s-1}\sum_{r=1}^3
\Big({\partial g_{s,r}(x)\over \partial x_k}
{\partial u_i^0(x')\over \partial x'_r}
-{\partial g_{s,r}(x)\over \partial x_i}
{\partial u_k^0(x')\over \partial x'_r}\Big)
+\sum_{s=0}^{\infty}t^s Q_{s,i,k}(x,t)
\end{align*}
where $Q_{0,i,k}(x,t)=Y'(t)h_{i,k}^0(x')+Y(t)f_{0,i,k}^0(x')$.
Comparing the coefficients of the powers of $t$ individually,
and then inserting $t=0$ to the equation of each coefficient yields,
for $s\ge 1$,
\begin{equation}
\sum_{r=1}^3\Big({\partial g_{s,r}(x)\over \partial x_k}
{\partial u_i^0(x)\over \partial x_r}
-{\partial g_{s,r}(x)\over \partial x_i}
{\partial u_k^0(x)\over \partial x_r}\Big)
=-{1\over s}Q_{s-1,i,k}(x,0).\label{e16}
\end{equation}
 From \eqref{e2} we obtain
\begin{align*}
0&=\sum_{i=1}^3\frac{\partial u_i}{\partial x_i}\\
&=Y(t)\sum_{i=1}^3\sum_{r=1}^3 {\partial x'_r(x,t)\over \partial x_i}
{\partial u_i^0(x')\over \partial x'_r}\\
&=Y(t)\sum_{i=1}^3{\partial u_i^0(x')\over \partial x'_i}
+Y(t)\sum_{s=1}^{\infty}t^s\sum_{i=1}^3\sum_{r=1}^3
{\partial g_{s,r}(x)\over \partial x_i}
{\partial u_i^0(x')\over \partial x'_r}
\end{align*}
The first term in the right-hand
side vanishes because $u^0$ is divergence-free.
Again, comparing the coefficients of the powers of $t$ individually,
and then inserting $t=0$ to the equation of each coefficient yields,
for $s\ge 1$,
\begin{equation}
\sum_{i=1}^3\sum_{r=1}^3 {\partial g_{s,r}(x)\over \partial x_i}
{\partial u_i^0(x)\over \partial x_r}=0.\label{e17}
\end{equation}
It seems that solving \eqref{e16} and \eqref{e17} for $s=1$
gives $g_1(x)$, then we can derive
for $s=2$ and obtain $g_2(x)$ and
so on, and that function $Q_{s,i,k}$ contains only terms
$g_{s',j}$, $s'\le s$. However, \eqref{e16} and \eqref{e17}
do not necessarily determine even
$g_1$ and this approach must be modified.
This may be a direction for research how to obtain linear systems,
like \eqref{e16} and \eqref{e17}, from which to continue,
but we will not study this method more in this short paper.
Notice that this approach cannot show that a solution $u(x,t)$ exists.

\section{Discussion}

Theorem \ref{thm1} proves that the solutions to the 3-dimensional Navier-Stokes
equations for incompressible fluid are not always unique for the
initial values and for the periodic solutions discussed
in Statement D of \cite{feff}.
This is not surprising, as the proof of uniqueness requires
periodicity of $p(x,t)$.
Periodicity is not required in Theorem \ref{thm1} or in \cite{feff}.
Another (different) counterexample to uniqueness of \eqref{e1}
is given in
\cite{lady}. The proof of Statement D in \cite{feff} only requires Lemma 2.1,
Theorem 2.3 and Theorem 2.4. Let us next discuss the Clay problem and
whether Theorem 2.4 actually proves Statement D.

The EJDE article [4] states the following:
In \cite{feff} it is stated that we know for a long time
that the initial data $u^0(x)$ can be continued uniquely to some finite
time. 

Looking at \cite{feff} 
we see that there is nowhere any claim of uniqueness.
Was there such a claim in the Spring 2008 or Summer 2009? 
I informed Fefferman and CMI about the need to correct the problem statement
but never received any answer. Did they modify the problem statement or not? 
This cannot be 
proven, but the following facts point out to the possibility that 
\cite{feff} was silently modified between Spring 2008 and Summer 2010.

I discussed [4] with an expert of the Clay Navier-Stokes problem, 
Claes Johnson from KTH in September 2012. Johnson has coauthored in 2008 
an article questioning if CMI Navier-Stokes problem is well-posed, so he 
should have read the official problem statement at that time. 
Johnson's first argument 
against [4] was that strong solutions for Navier-Stokes equations are unique 
and therefore [4] must be wrong. Johnson gave the Fefferman quote as a 
support of his words. This quote is in the official problem statement on 
page 2 of \cite{feff} : it is known that (A) and (B) 
hold (also for $\nu=0$) if the time interval $[0,\infty)$ is replaced by a
small time interval $[0,T)$. However, looking at (A) and (B), that is 
Statements A and B in \cite{feff} on page 2,
we can see that these statements do not make any claim
that solutions can be continued uniquely. Furthermore, when we search for
versions of the official problem statement in the web, we do not find
any version where (A) and (B) would ever have claimed that solutions are 
unique. This appears strange considering Johnson's argument.

There is also the following case: in Spring 2008 [4] was in review in the 
journal AASF. The referee of AASF gave the same reference to the Fefferman 
quote as a justification of their claim that strong solutions are unique. I 
looked up Statements A and B at that time, and they did claim that solutions 
are unique. At that time I wrote the sentence to [4] stating that \cite{feff}
makes a claim of uniqueness. Was the AASF referee, and I, maybe dreaming?

Finally there is also the following case: in 2009, while checking [4], Jouni
Luukkainen sent to CMI an email requesting that they correct the problem 
statement, the solutions are not unique unless $p$ is required to be periodic.
Luukkainen would not have sent this email if the problem statement did not 
make the claim of uniqueness. Was he maybe also dreaming?

Now the official problem statement says, or should we say, has always stated,
that either (A) and (B) hold, or there is a blowup solution. This apparently 
means that
either (A) holds or (C) holds, and either (B) holds or (D) holds. 

This claim is also wrong,
We can very well have the situation that (B) holds and (D) also holds. 
Statement B says that for initial data $u^0(x)$ and zero external force 
$f$ there exists a smooth solution 
that can be continued to the whole space-time. Statement D says that there 
exists initial data $u^0$ and a force $f$ such that all solutions blow up.
These statements are not exact opposites. There can be $u^0$ such that for
identically zero $f$ there exists a smooth solution in the whole space-time,
but there also exists a force $f$, that is not identically zero but has zero 
value everywhere, such that there is only one solution under this force and
that solution blows up. Such a force can be defined as a feedback control
force. If there are many solutions, there can be one solution that can be
continued to the whole space-time, and one blowup solution. We can select
a feedback control force that steers all solutions (for which this force
is defined) to join the blowup solution. This has been done in Theorem 2.4. 

Earlier, when the official problem statement claimed in (A) and (B) that
solutions are locally unique (if it ever claimed this, we cannot prove it) 
it was very clear that the solution in [4] is a solution of CMI 
Navier-Stokes problem, but even now when (A) and (B) make no claim of 
uniqueness, the solution of [4] is still a valid one. The official problem
statement should still be corrected as [4] demands by stating: Unless 
Theorem \ref{thm3} is accepted as a proof of Statement D
in \cite{feff}, the official problem statement for the millennium
problem must be corrected.

Statement D in \cite{feff} 
has in the title Breakdown of Navier-Stokes Solutions on 
$\mathbb{R}^3/\mathbb{Z}^3$ 
but the statement is given on $\mathbb{R}^3$.
It is the statement that has to be 
shown. It can be given any title. The statement lists all conditions (1), (2),
(3), (8), (9), (10), (11) in [2].
Statement D does not require the pressure to be space-periodic.

If you very much object to concluding that Statement D does not require 
space-periodicity for $p$ and think that of course $p$ must be space-periodic
since the title of Statement D says $\mathbb{R}^3/\mathbb{Z}^3$, and that
this counterexample only works because of this minimal formal error in the 
problem statement, please consider Statement C. 
Take a solution filling the conditions of Statement A for zero
external force and make the transform in Section 3 to that solution. 
Statement C does not set any growth conditions to the pressure. 
The external force has value zero, so there is no problem from it. The only 
additional condition you have to check is (7) in \cite{feff}, bounded energy.
In the transform of Section 3 your $u(x,t)$ has been transformed 
to $u(x',t)-g'(t)$ where 
$x'_j=x_j+g(t)$. The bounded energy condition in \cite{feff}
 requires that 
$$\int_\mathbb{R}^3 |u(x,t)|^2 dx$$ is bounded by some number.
Select $g(t)$ that is almost zero and has a very thin spike that goes 
to infinity. When $g(t)$ grows, $x'_j$ grows and $u(x',t)$ vanishes in 
infinity. It is easy to find a function $g(t)$ that yields bounded energy.
Statement C clearly says $\mathbb{R}^3$ in the title and we can create
in almost exactly the same way a counterexample proving Statement C. 

The solutions are required to be physically reasonable. 
There is the physicality condition (7) in \cite{feff} 
but it is not required in 
Statement D. In Statement D physical reasonability is (10) and (11) of 
\cite{feff},
which we already included in Statement D. The initial data is $u^0$. 
That is physical here and in Theorem 2.4 of [4]. 
Our time derivatives of $u$ are not physical at $t=0$, 
but they are not required to be physical.

The force in Theorem 2.4 of [4]
is a feedback control force of the type as the steering force steering a 
car moving on a straight road. If the car moves to the direction of the 
road, is stays on the road without steering. If it moves to other 
directions, it drops off the road. A steering force turns the car to the 
direction of the road and then keeps it there. When the car is in the 
direction of the road, the force has zero value. How much work the force 
has to do to get the car on the correct direction depends on what was the 
original position of the car. If it already was in the correct direction, 
the force has always zero value. This last case is the situation in 
Theorem 2.4. 

Mathematically we can consider a linear system $du/dt=ku+c+f$ where $k$ 
and $c$ can have many values and one solution for $f=0$ is $dv/dt=av+b$. 
A feedback force $f=du/dt-dv/dt$ gives a solution $f=0$, $u=v$ in case 
$k=a$ and $c=b$. So, if a linear system is started in correct direction, 
the feedback force always has 
the value zero. If the system has some other starting values $k,c$, the 
force $f$, which in this case is not the same as given above, first moves 
the system to the correct direction and then becomes zero. If the system 
is nonlinear, like the Navier-Stokes equation, 
we linearize it at a vicinity of $t=0$ and the control force behaves in 
the same way.  After some time the feedback control force sets $u=v$ and $f$ 
becomes zero. After that point $u=v$ for all time because strong solutions are 
unique after $u$ equals $v$ for an interval. 

What then is the initial position in Theorem 2.4? Does the external force 
$f$ first need to move the system into correct direction? 
No. Actually $f$ cannot move the system into the correct direction. It must
do a selection as in Theorem 2.4. Let me explain this issue. 

Statement D has upper limits in (4) of \cite{feff} 
(equation (2.7) in Theorem 2.4) to the absolute value of all 
derivatives of $f$, and moving a solution with some $g(t)$ to have 
the $g(t)$ of the blowup solution must require force. As there are 
arbitrarily large functions $g(t)$, the force must in some cases 
be higher than any limit. Therefore we see that a single feedback control
function $f$ cannot steer all solutions to the blowup solution. If we want
to use a feedback control function, it can only be defined for a subset of 
the values of $g(t)$, such functions $g(t)$ that are sufficiently close to
each other that they can be steered under the constraints (4)
of \cite{feff} poses. 

This situation is more generally true for feedback control forces. Usually
the same feedback control force cannot be used to control all possible 
moves of the system. In the car example, the control system includes the 
turning front wheels and the steering wheel, among othe things. It is not 
possible to steer the car so that it moves sideways. If you want to do that,
you have to use another feedback control force, for instance a bulldozer. 
We must define the feedback control force so that it can control the values of
$g(t)$ that are in the subset of values of $g(t)$ that the force controls.
Thus, when a feedback control force is selected, we also select the subset
of values of $g(t)$ that it controls.

This phenomenon is here called selection and its opposite is called filtering. 
Feedback control forces have two ways of working: they can steer a 
solution to follow the chosen solution, which requires work, or they can
filter solutions. Filtering does not require work: when choosing a feedback
control function we can only get a selected solution, a solution for which 
this particular control function is used for, not a filtered one. They are
impossible for the control function.
 
Selection in our case can be explained in the following way: 
Physical fluid has certain time derivatives for $u$ at $t=0$. These time
derivatives are determined by the function $g(t)$.
The fluid does not have all possible time
derivatives at the same time, but different values of (analytic) 
$g(t)$ correspond to different instances. 
Each feedback control force $f$ can control only a 
subset of all possible values of $g(t)$, so when we select $f$ we also
select the subset of $g(t)$ which is controlled by this $f$. 

We define a feedback control force that can control only one value
$g(t)$. The force $f$ has only one $g(t)$ in the subset. Thus, selecting
this $f$ implies selecting the particular $g(t)$. We have changed the selection
of the force $f$ to a selection of the function $g(t)$. This is what Theorem 
2.4 does. 

As an example, consider
a situation where we have a large set of different balls, they correspond
to the functions $g(t)$, and the task is to select what to do, this 
corresponds to selecting the force $f$. If we select an activity that has a 
purpose, such as playing tennis, then we also at the same time select a ball
that can be used in that activity. If we select playing tennis, we do not have
to ask how can we play tennis with a football. A feedback control force has
the purpose of controlling. Therefore selecting such a force selects those 
functions $g(t)$ that the force is meant to control. We created a force that
only controls one particular $g(t)$. Selecting that force also selects that 
$g(t)$. If the problem statement is not supposed to allow feedback control
forces, or more generally forces that have a purpose, it must be clearly 
stated in the problem description. Nothing in \cite{feff} 
forbids us from selecting 
such a feedback control force.

We see that by selecting the force $f$, we select the only one solution 
that has the time derivatives of $u$ at $t=0$ so that the initial position 
is that of $v$, the blowup solution. As there is only one solution, all 
solutions for this force have the blowup. 
Then $f$ has value zero from the beginning $t=0$ and $u=v$ 
all the time as in Theorem 2.4. 

Is a feedback control force allowed in Statement D? 
The official problem statement does not deny such a force. 
Therefore it is allowed. The problem statement only says: 
$f$ is a given, externally applied force (e.g. gravity). 
Given means that the term $f$ is not an unknown like $u$ and $p$, 
but given. In Statement D it can be selected. A given force can very 
well be given either explicitly or implicitly as a function of $u$. 
External forces in NSE often have feedback from $u$. 
Gravity, the only example mentioned, has feedback from the distribution 
of the fluid mass. Fluid has mass or gravity had no effect. Mass 
creates gravity. We do not know if the whole space is filled with 
fluid. Thus, the mass distribution is a function of $u$. 
The gravitation force has an additive component given by a function of $u$. 
In general, the interest in fluid turbulence is largely caused by research 
on propulsion systems. Propulsion systems are a part of a steering system 
and thus, they have feedback from water motion. As the CMI problems are 
intended for a wide public and they explain even simple things to the 
reader, they certainly are intended to contain all assumptions. 
As there are no explicit limitations to the external force, there are 
no implicit assumptions that the external force must fill. Thus, $f$ 
can be a feedback control force. 

This is the whole solution in [4]. Did we see any misinterpretation 
of the problem description? No. 
A misinterpretation
is something that violates some statement in the problem statement. 
Everything we did was in accordance with the official problem statement 
\cite{feff}.

\section{Answering critizism}

The article [4] has been so far discussed with 21 mathematicians or physicists.
Nobody has presented a valid argument against the proof of Statement D in [4]. I will go through the critizism against [4].

Grigory Rozenblioum noticed a small error in [4]. If does not affect the proof,
and has been corrected here by the first footnote (just before Lemma 2.1). 
The error was that [4] and \cite{feff} do not
require the solutions to be analytic, only smooth. If $g(t)$ can be 
nonanalytic, then we can for instance define
$$
g(t)={1\over a-t}e^{-{1\over x^2}}, \, g(0)=0.
$$
This smooth function has all derivatives zero at $t=0$ and a singularity
at $t=a$. Clearly, $u(x,t)$ which has this $g(t)$ has at $t=0$
the same time derivatives as $u(x,t)$ which has $g(t)=0$. Uniqueness of strong
solutions cannot be achieved by selecting the time derivatives of $u(x,t)$
at $t=0$ for nonanalytic solutions. The second claim of Rozenblioum was 
incorrect, the erronous sentence is not used in the proof. The second footnote
(in Theorem 2.4) explains this issue. It seems unnecessary to publish an 
errata for the EJDE
article since the only error Rozenblioum noticed is in an informal description
of Lemma 2.1 and if understood in the correct context it is correct.  
Rozenblioum gave a seminar on [4] but his objective was not to see if the 
proof is correct or not. Rozenblioum thought he had found a serious error from
[4], did not contact me about it, and probably told his students that there
is an error. Later, Fall 2012, Rozenblioum wrote to me that he stopped 
reading when noticing that [4] does not require $g(t)$ to be
analytic. When shown that this does not invalidate the proof, Rozenblioum
stated that he will not discuss whether [4] solves the Clay problem or not.
It does not interest him. Then why did he present [4] in a seminar?

The article [4] was in review in five journals. This was interpreted by
Rozenblioum as indicating that the article was controversial. Actually, as 
anybody who has tried to submit a manuscript with a solution to a well-known
difficult problem knows, 
it is very common that journals do not review such papers
correctly. Of these reviews, the editor of JDE first immediately claimed that 
there are errors, but when asked to state the errors changed the reason for 
rejection of the article to it not being interesting to the readers. 
Thus, JDE did
not review the article. AASF referred to the Fefferman quote of uniqueness
in \cite{feff}, which, as we can see, does not exist. 
Because of this nonexistent quote, AASF concluded that [4] must be wrong. 
AASF did not agree to review the paper again even though their referee was 
shown incorrect. 
JHDE rejected the article 
because the original proof of Theorem 2.4 was not for them satisfactory. The
proof was rewritten, but JHDE did not agree to review the paper another time.
The referee of JHDE also stated that it is well-known that strong solutions 
are not unique. Next the manuscript was submitted to JDDE, which
rejected the paper because it is well-known that strong solutions are 
unique, as is proven by a theorem in \cite{tom}. This proof was shown to use 
an implicit assumption of space-periodicity of $p$, but JDDE did not agree 
to review the paper again. 
At this point I had two exactly contradictory referee statements: one from JHDE
stating that strong solutions are well-known to be nonunique and from JDDE
that strong solutions are well-known to be unique. With these referee 
statements I managed to get a group of Finnish mathematicians and physicists
to look at the manuscript. Those who allowed being mentioned are thanked 
in the acknowledgements. Finally the manuscript was submitted to EJDE.
The editor of EJDE sent to me after six months a report showing that eight
referees had refused to review the article. Finally EJDE did find referees
who reviewed and accepted it.

There have also been some discussions with experts. 
Jouni Luukkainen from the small group of Finnish mathematicians stated that 
a given, externally applied force (e.g. gravity) cannot refer to a feedback 
force. He did not explain why he thinks so. Juha Honkonen noted that the force
is not energy preserving and Fung Lang made later a similar comment that energy
is not correctly obtained by integrating over one space period. This is caused
by the pressure not being space-periodic. Both accepted this answer. 
I tried to get an opinion from Terence Tao, who wrote a blog on Navier-Stokes 
equations and had a discussion forum where he answered to questions from 
people. I sent a post telling that [4] was published. Tao dropped all posts 
from me until I changed the user name. I did get one comment from Tao. He
claimed that the feedback force in Theorem 2.4 is a circular 
definition. It is not. It is a quite normal feedback control force 
definition. Tao did not want to discuss the article any more and forbid me
from writing any more to the discussion board on his web-page. 
Another mathematics blogger, Claes Johnson, at least did not drop my 
posts and actually did send to me two emails and initially agreed to 
discuss [4]. Johnson claimed that strong solutions are unique referring to 
the nonexistent Fefferman quite. When shown wrong, he did not want to discuss 
further. 
Internet 
characters cowgod2 and Robert Coulter (not the mathematician with this name)
discussed [4] for a long time in the web. 
Cowgod2 finally accepted the proof - first he argued that strong solutions 
are unique. Robert 
Coulter also accepted the proof but claimed that the problem solved is a 
misunderstaning of the CMI problem. He did not explain where the 
misunderstanding is but in the end tried to argue that feedback forces 
cannot be used. That they are never used in the Navier-Stokes equations. 
It just happened that Enrico Bombieri had sent me a list of papers where 
feedback forces are used in Navier-Stokes equations. His goal was to show 
that feedback control forces are nothing new in that field, so I made no 
discovery. The list did invalidate Coulter's argument. Coulter is true in 
that respect that it seems that several experts of 
the Navier-Stokes equations do not know feedback control forces. 
Especially they do not seem to know that a feedback control force can
be used to select a solution, i.e., to transfer the selection of the force 
to selection of $u(x.t)$. 
The CMI finally gave a reply after over two years from publication of [4]
had passes. The CMI stated that the claim that [4] solves the CMI problem
is based on a misinterpretation of the problem description. 
I think they broke the US law of skill contests by not setting up a group 
of experts to check [4]. Two years from publication in a journal of good reputation had passed and [4] has not been refuted or disputed
in any scientific forums. Therefore it is generally 
accepted by the mathematical community. That is what generally accepted means.
Because of these two conditions were filled, the CMI rules require a group 
to be set to check the solution. In skill contests stated rules must be 
followed. But who would like to sue those people. The CMI did not 
explain where they see a misinterpretation. In fact, there is no 
misinterpretation. Everything is as in \cite{feff}.
As Bombieri had already sent me the list, I asked him if he could tell where
I have a misinterpretation of the problem statement as the CMI refused to 
explain or to answer anything.
Bombieri wrote that he is no expert on the topic and cannot say if
[4] solves the Clay problem or not, and that it is a nonmathematical question 
to answer what in CMI's opinion is a solution to their problem. 
Was it not supposed to be a precise mathematical question? 

Other comments concerning [4] either accepted the proof or merely stated that
I will not get the prize. That was a good prediction, though an easy one to 
make. Still, it is not a mathematical argument. The article [4]
was sent to over 50 experts. Most did not answer. It has been in arxiv since 
2008 and published in a journal since 2010. 
It is correct to
state that there are no valid mathematical arguments against the proof of
Statement D in [4]. The Clay Navier-Stokes problem is solved, but we obtained
other interesting conjectures as well: the CMI may be prepared to break the 
skill contest law if needed, Clay official problem statements may be silently 
modified when a solution is in journal review, mathematics professors maybe 
arrange seminars on somebodys paper with the only intention of ridiculing the 
author and with no intention of trying to understand the paper, journals maybe
do not treat all manuscripts correctly and possibly the referees make errors
and refuse to re-evaluate the paper, experts maybe forbid some people to enter
their nominally open web discussion boards, 
experts possibly do not answer, some
experts may not understand the classical theorems of their own field. I cannot
prove any of these claims, but there is some indication that someting like
this may be true.

\section{Conclusions}

We conclude that Theorem 2.4 is indeed a valid proof of Statement D. 
Theorem 2.4 shows that there exists a space-periodic solution that cannot 
be continued to the whole space. The initial conditions and the external 
force in this solution confirm fully to the requirements of the problem 
statement. The solution is given explicitly. 

But the mathematicians do not like this solution.

Thus, the problem statement should be corrected. Let us make some 
observations concerning this issue. The counterexample in Theorem 2.4 
was possible because Statement D does not require the pressure to be 
space-periodic and because it does not forbid feedback control forces. 
Let us see what can be done to fix the problem. 

Forbidding feedback control forces does not help. Usually by a counterexample
to existence is meant that does there exist such $u(x)^0$ and such $f$ 
that the solution has a blowup. The answer is yes: the solution $U$ in
Theorem 2.4 has a blowup with the force $f=0$. However, as \cite{feff}
is now formulated, this argument may not be sufficient.
As \cite{feff} now is, one can forbid feedback control 
forces, but the issue has to be carefully considered, since gravity is a 
feedback force. Maybe we want to forbid forces that have a purpose, such 
as control forces? It was the purpose of the force that allowed us to link 
the selection of the force $f$ to the selection of $g(t)$. 
 
How about space-periodicity? It does not help to require space-periodicity 
for $p$ in Statement D. If we add a condition that 
the pressure be space-periodic in Statement D, the solution of [4] still 
exists. Only it does not fill the required conditions and is therefore a 
counterexample to existence. Proving Statement D requires finding a 
counterexample to existence. If we reformulate Statement D in a torus 
$T^3=\mathbb{R}^3/\mathbb{Z}^3$ instead of $\mathbb{R}^3$, 
Statement D is fixed, but a similar counterexample can be 
made for Statement C. There are no restrictions to pressure 
in Statement C and we really cannot put restrictions on pressure: 
pressure can be eliminated from the equation at least for $f=0$ (see [4], 
there it is done), and pressure is calculated from $u$. If we require $p$ to 
behave nicely, we assume $u$ to behave nicely. Then there is no sense to 
prove that $u$ behaves nicely. 

We could change the problem statement in such a way that the transform 
in Section 3 is not
allowed, but are we sure that it is the only transform? Where is a proof of 
that? It would be natural to forbid the transform in Section 3 since
the goal of the Clay problem statement is to have a 
physically reasonable problem on Navier-Stokes equations. Allowing this 
transform, and now it is allowed in \cite{feff}, 
already moves the solutions to 
the dreamworld. Physical fluid always has time derivatives defined at 
$t=0$. Such solutions as come from the transform are unphysical. We again
must say that the problem statement is very poor. 

The easiest change is to make Statement D the exact opposite of Statement B.
That is, to state that the external force is identically zero in Statement D, 
but this is not what we want. 

It is of course possible to state the official problem statement correctly 
in some
way but it is not obvious what way CMI will use. Therefore the problem 
statement must first be given a correct formulation before one can know what
exactly should be proven. [4] proves exactly what was stated in Statement D.
If that is not what should have been proven, then the problem statement is
incorrectly formulated.

\subsection*{Acknowledgments}
I want to thank the following people: Henryka Jormakka and
Jouni Luukkainen for reading most of the paper and suggesting valuable
improvements to the text, Juha Honkonen and Eero Hyry
for useful discussions concerning the beginning of the paper,
Matti Lehtinen for interesting discussions on many issues, and
the anonymous referees for their comments.
Without your help this paper would not have been checked.
I also thank Grigory Rozenbioum for pointing out that if the function $u(x,t)$
is not analytic, the solutions are not unique. This observation has been 
taken care of by a minor modification.

\end{document}